\documentclass[12pt, reqno]{amsart}
 \usepackage{amsmath, amsthm, amscd, amsfonts, amssymb, graphicx, color, float}
\usepackage[bookmarksnumbered, colorlinks, plainpages]{hyperref}

\setlength{\textwidth}{6.5in} \setlength{\textheight}{8.5in}
\setlength{\evensidemargin}{-0.2in}\setlength{\oddsidemargin}{-0.2in}

\newtheorem{theorem}{Theorem}[section]
\newtheorem{lemma}[theorem]{Lemma}
\newtheorem{proposition}[theorem]{Proposition}
\newtheorem{corollary}[theorem]{Corollary}
\theoremstyle{definition}
\newtheorem{definition}[theorem]{Definition}
\newtheorem{example}[theorem]{Example}

\theoremstyle{remark}

\numberwithin{equation}{section}
\begin{document}

\title[Frames for operators in Banach spaces]
{Frames for operators in Banach spaces \\via semi-inner products}

\author[B. Dastourian]{Bahram Dastourian$^*$}
\address{Bahram Dastourian, Department of Pure Mathematics, Ferdowsi University of Mashhad, Mashhad, P.O. Box 1159-91775, Iran}
\email{bdastorian@gmail.com}

\author[M. Janfada]{Mohammad Janfada}
\address{Mohammad Janfada, Department of Pure Mathematics, Ferdowsi University of Mashhad, Mashhad, P.O. Box 1159-91775, Iran}
\email{mjanfada@gmail.com}

\thanks{2010 Mathematics Subject Classification: 42C15, 46B15, 46C50, 94A20. }
\keywords{$X_d^*$-$K$-Frames; $X_d^*$-atomic systems; families of local $X_d^*$-atoms; semi-inner products; sampling; reproducing kernel Banach spaces; $X_d^*$-$K$-frame operators; perturbations.}
\begin{abstract}
In this paper, we propose to define the concept of family of local atoms and then we generalize this concept to the atomic system for operator in Banach spaces by using semi-inner product. We also give a characterization of atomic systems leading to obtain new frames. In addition, a reconstruction formula is obtain. Next, some new results are established. The characterization of atomic systems  allows us to state some results  for sampling theory in semi-inner product reproducing kernel Banach spaces. Finally, having used frame operator in Banach spaces, new perturbation results are established.
\end{abstract}

\maketitle

\section{Introduction}
Atomic systems and $K$-frames, where $K$ is a bounded linear operator on separable Hilbert space $\mathcal H$, introduced by L. G\u{a}vru\c{t}a in \cite{Guv} as a generalization of families of local atoms \cite{FW}. A sequence $\{ f_j\}_{j\in \mathbb{N}}$ in the Hilbert space $\mathcal H$ is called an \textit{atomic system} for the bounded linear operator $K$ on $\mathcal H$ if\\
(i) the series $\sum_{j\in \mathbb{N}}c_j x_j$ converges for all $c = (c_j) \in l^2:=\{\{b_j\}_{j\in \mathbb{N}} : \sum_{j\in \mathbb N}|b_j|<\infty\}$;\\
(ii) there exists $C > 0$ such that for every $f\in \mathcal H$ there exists $a_f=(a_j)\in l^2$ such that $\|a_f\|_{l^2}\leq \|f\|$ and $Kf =\sum_{j\in \mathbb{N}}a_j f_j$.\\
Also a sequence $\{ f_j\}_{j\in \mathbb{N}}$ is said to be a \textit{$K$-frame} for $\mathcal H$ if there exist constants $A, B > 0 $ such that
\[ A\|K^*f\|^2\leq\sum_{j\in \mathbb{N}}|\langle f, f_j\rangle|^2\leq B\|f\|^2,~~ f\in \mathcal H.\]
It is proved that these two concepts are equivalent \cite{Guv}. We refer to \cite{XZG} for more results on these concepts. In addition, we generalized these two concepts and gave some new results \cite{DJ}. Note that frames in Hilbert spaces are just a particular case of $K$-frames, when $K$ is the identity operator on these Hilbert spaces. Frames in Hilbert spaces were introduced by Duffin and Schaffer \cite{Duf} in 1952. In 1986, frames were brought to life by Daubechies \textit{et al.} \cite{Daub}. Now frames play an important role not only in the theoretics but also in many kinds of applications, and have been widely applied in signal processing \cite{Fer}, sampling \cite{Eld1, Eld2}, coding and communications \cite{Stro}, filter bank theory \cite{Bol}, system modeling \cite{Dud}, and so on.

However, the theoretical research of frames for Banach spaces is quite different from that of Hilbert spaces. Due to the lack of an inner-product, frames for
Banach spaces were simply defined as a sequence of linear functionals from $X^*$, the dual space of $X$, rather than a sequence of basis-like elements in
$X$ itself. Feichtinger and Gr\"{o}cheing \cite{FG} extended the notion of atomic decomposition to Banach space. Gr\"{o}cheing \cite{G} introduced a more
general concept for Banach spaces called Banach frame. Aldroubi \textit{et al.} \cite{AST} introduced $p$-frames and then this frame was discussed in separable Banach
space in \cite{CS}. Now we are going to state frames for separable Banach spaces by Casazza \textit{et al.} \cite{CCS}. In so doing, they introduced the following definition.
\begin{definition}
A sequence space $X_d$ is called a \textit{BK-space}, if it is a Banach space and the coordinate functionals are continuous on $X_d$, i.e. the relations
$x_n=\{\alpha_j^{(n)}\}, x=\{\alpha_j\}\in X_d, \lim_{n\to\infty}x_n=x$ imply $\lim_{n\to\infty}\alpha_j^{(n)}=\alpha_j (j=1, 2, 3, \cdots).$
\end{definition}
Also we add some notions about this Banach space. If the canonical vectors form a Schauder basis for $X_d$, then $X_d$ is called a \textit{CB-space} and its canonical basis is denoted by $\{e_j\}$. If $X_d$ is reflexive and a CB-space, then $X_d$ is called an \textit{RCB-space}.

Frame for separable Banach space \cite{CCS} is introduced as follows.
\begin{definition}
Let X be a separable Banach space and $X_d$ be a BK-space. A countable family $\{f_j\}$ in the dual $X^*$ is called an \textit{$X_d$-frame} for $X$ if \\
$(i)$ $\{f_j(f)\}\in X_d, \forall f\in X$;\\
$(ii)$ the norm $\|f\|_X$ and $\|\{f_j(f)\}\|_{X_d}$ are equivalent, i.e. there exist constants $A,B>0$ such that
\[ A\|f\|_X\leq \|\{f_j(f)\}\|_{X_d}\leq B\|f\|_X, ~~~ \forall f\in X.\]
When $A$ and $B$ are called $X_d$-frame bounds. $\{f_j\}$ is called an \textit{$X_d$-Bessel sequence} for $X$, if at least $(i)$ and the upper frame condition are satisfied.
\end{definition}
In 2011, H. Zhang and J. Zhang \cite{ZZ} introduced a new definition of $X_d$-frames by using semi-inner product. Under such a definition, an $X_d$-frame is
exactly a sequence of elements in $X$ itself.

 The main purpose of this paper is to provide a language for the study of families of local $X_d^*$-atoms, $X_d^*$-atomic systems and $X_d^*$-$K$-frames in Banach spaces via a compatible semi-inner product, which is a natural substitute for inner products on Hilbert spaces. We obtain some new results. In particular, we characterize $X_d^*$-$K$-frames. As a consequence, we state the main result for any semi-inner product reproducing kernel Banach spaces. Our last result of this paper is to show that the Casazza--Christensens perturbation theorem \cite{CC} of Hilbert space frames somehow holds for any $X_d^*$-$K$-frames by making use $X_d^*$-$K$-frame operator.

This paper is organized as follows. In Section 2 we will give the notation used in the paper, especially the definition of semi-inner product and
its properties. In Section 3 we extend notions about families of local $X_d^*$-atoms, $X_d^*$-atomic systems and $X_d^*$-$K$-frames in Banach spaces and new results are given. In section 4 by applying results from Section 3, we will present new result in reproducing kernel Banach spaces. Finally, in the last section we define $X_d^*$-$K$-frame operator and state its properties and then a perturbation of $X_d^*$-$K$-frame, the so-called Paley-Wiener perturbation \cite{P1, P2, P3}, is given by use of $X_d^*$-$K$-frame operator.
\section{Preliminaries}
We first state the following concept introduced by Lumer \cite{L} in 1961 but its main properties discovered by Giles \cite{Gil}, Nath \cite{N} and others.

A \textit{semi-inner product} (in short s.i.p.) on a complex vector space $X$ is a complex valued function $[f, g]$ on $X\times X$ with the following properties:\\
$1. [\lambda f+g, h]=\lambda[f, h]+[g, h]$ and $[f, \lambda g]=\overline{\lambda}[f, g]$, ~~~for all complex $\lambda$,\\
$2. [f, f]\geq 0$, ~~~ for all $f\in X$ and $[f, f]=0$ implies $f=0$,\\
$3. |[f, g]|^2\leq [f,f] [g,g].$\\

It was shown in \cite{L} that if $[., .]$ is a s.i.p. on $X$ then $\|f\|:=[f, f]^{\frac{1}{2}}$ is a norm on $X$, and in this situation, the semi-inner product is called compatible. Conversely, if $X$ is a normed vector space then it has a s.i.p. that induces its norm in this manner. A compatible semi-inner product is non-additive with respect to its second variable. Namely,
\[ [f, g+h]\neq[f, g]+[f, h], \quad f, g\in X, \]
in general. The concept of s.i.p. has been proved useful both theoretically and practically, and has been widely applied in the theory of functional analysis \cite{F, L, PP, P}, machine learning on reproducing kernel Banach spaces (in short RKBS) \cite{ZX} and so on. The reader is referred to \cite{D} for more information about semi-inner products. By properties 2 and 3 of semi-inner products, for each $f\in X$ the function that sends $g\in X$ to $[g, f]$ is a bounded linear functional on $X$. We shall denote this functional associated with $f$ by $f^*$ and call it the dual element of $f$. In other words, $f^*$ lies in the dual space $X^*$ of $X$. The mapping $f \to f^*$ will be called the duality mapping from $X$ to $X^*$.

A Banach space $X$ is called \textit{strictly convex}, whenever $\|f+g\|_X=\|f\|_X+\|g\|_X$ where $f, g\neq 0$ then $f=\alpha g$ for some $\alpha>0$. In this space the duality mapping from $X$ to $X^*$ is bijective \cite{F}. In other words, for each $f^*\in X^*$ there exists a unique $g\in X$ such that
$f^*(g)=[g, f]$ for all $g\in X$.  Moreover, we have $\|f^*\|_{X^*}=\|f\|_X$. Also in this case, $[f^*, g^*]_*:=[g, f], ~~~ f, g\in X$ defines a compatible semi-inner product on $X^*$ \cite{Gil}.

A Banach space $X$ is uniformly convex if for all $\epsilon>0$ there exists a $\delta>0$ such that $\|f+g\|_X\leq 2-\delta$ for all $f, g\in X$ with $\|f\|_X=\|g\|_X=1$ and $\|f-g\|_X> \epsilon.$
Bear in mind that a uniformly convex Banach space is reflexive \cite[page 134]{Con} but a reflexive Banach space is not necessarily uniformly convex \cite{MMD}. Every uniformly convex Banach space is automatically strictly convex.

H. Zhang and J. Zhang \cite{ZZ} introduce $X_d$-frame for Banach spaces via semi-inner products as follows.
\begin{definition}
Let $[., .]$ be a compatible s.i.p. on separable strictly convex s.i.p. Banach space $X$ and $X_d$ be an RCB-space. Then the family $\{f_j\}\subseteq X$ is called \textit{$X_d$-frame} for $X$ if\\
$(i)$ $\{[f, f_j]\}\in X_d, ~~~~~ \forall f\in X$;\\
$(ii)$ there exist constants $A,B>0$ such that
\[ A\|f\|_X\leq \|\{[f, f_j]\}\|_{X_d}\leq B\|f\|_X, ~~~~~ \forall f\in X.\]
\end{definition}
Recently, Zheng and Yang \cite{ZY} have presented $X_d$-frame for separable uniformly convex Banach spaces via semi-inner products, when $X_d$ is just a BK-space or CB-space.

Let $B(X, Y)$ be the bounded linear operator defined on Banach space $X$ with values in Banach space $Y$. We also write $B(X)$ instead of $B(X,X)$. We say that $T\in B(X, Y)$ majorizes $Q\in B(X, Z)$ if there exists $\gamma>0$ such that $\|Qf\|\leq \gamma\|Tf\|$ for all $f\in X$ \cite{B}. We also recall that a closed subspace $M$ of $X$ is complemented if there exists a closed subspace $N$ of $X$ such that $X$ can be written as the topological sum of $M$ and $N$. The range of any operator $T$ is denoted by $R(T)$. Finally, we cite some useful lemmas that will be used in the sequel as follow.
\begin{lemma}\cite{B}\label{maj} Suppose that $T\in B(X, Y)$, $U\in B(X, Z)$, and $V\in B(Z, Y)$. Then the following statements hold.\\
$(i)$ If $T$ majorizes $U$ and $\overline{R(T)}$ is complemented, then there exists $Q\in B(Y, Z)$ such that $U=QT$;\\
$(ii)$ Assume that $T$ majorizes $U$. Then $R(U^*)\subseteq R(T^*)$;\\
$(iii)$ Assume that $R(V)\subseteq R(T)$. Then $T^*$ majorizes $U^*$.
\end{lemma}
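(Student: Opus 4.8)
The plan is to establish the three parts in turn, each time routing the hypothesis through the subspace $R(T)\subseteq Y$ via a well-defined ``partial inverse'' of $T$. For $(i)$, define $Q_0\colon R(T)\to Z$ by $Q_0(Tf)=Uf$; this is well defined because $Tf=Tg$ gives $T(f-g)=0$, whence the majorization $\|U(f-g)\|\le\gamma\|T(f-g)\|=0$ forces $Uf=Ug$. The same inequality gives $\|Q_0(Tf)\|\le\gamma\|Tf\|$, so $Q_0$ is bounded on $R(T)$ and, $Z$ being complete, extends to a bounded $\widetilde Q\colon\overline{R(T)}\to Z$. Here the complementation hypothesis enters: choose a bounded projection $P\colon Y\to\overline{R(T)}$ and set $Q:=\widetilde Q\,P\in B(Y,Z)$; then $QTf=\widetilde Q(PTf)=\widetilde Q(Tf)=Uf$ for every $f$, i.e.\ $U=QT$.

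For $(ii)$, take an arbitrary $x^*\in R(U^*)$, say $x^*=U^*z^*$ with $z^*\in Z^*$, and exhibit a $T^*$-preimage of it. Define $\psi$ on $R(T)$ by $\psi(Tf)=z^*(Uf)$, which is well defined for the same reason as above and satisfies $|\psi(Tf)|\le\|z^*\|\,\|Uf\|\le\gamma\|z^*\|\,\|Tf\|$; thus $\psi$ is a bounded linear functional on the subspace $R(T)$, and by Hahn--Banach it extends to some $y^*\in Y^*$. Then for all $f\in X$ we get $(T^*y^*)(f)=y^*(Tf)=\psi(Tf)=z^*(Uf)=(U^*z^*)(f)=x^*(f)$, so $x^*=T^*y^*\in R(T^*)$, and hence $R(U^*)\subseteq R(T^*)$. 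This part uses only Hahn--Banach, not complementation.

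For $(iii)$ --- whose conclusion I read as ``$T^*$ majorizes $V^*$'', the only well-posed version, since $T^*$ and $V^*$ share the domain $Y^*$ whereas $U^*$ does not --- the plan is to factor $V$ through $T$. Let $\pi\colon X\to X/\ker T$ be the quotient map and $\widetilde T\colon X/\ker T\to Y$ the induced bounded injective operator, so that $T=\widetilde T\pi$ and $R(\widetilde T)=R(T)$. Since $R(V)\subseteq R(T)$, each $Vz$ has a unique $\widetilde T$-preimage $Sz\in X/\ker T$; the resulting linear map $S\colon Z\to X/\ker T$ has closed graph --- if $z_n\to z$ and $Sz_n\to\eta$, then $\widetilde T\eta=\lim_n\widetilde T(Sz_n)=\lim_n Vz_n=Vz=\widetilde T(Sz)$, so $\eta=Sz$ by injectivity --- hence $S$ is bounded by the closed graph theorem and $V=\widetilde T S$. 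Taking adjoints, $V^*=S^*\widetilde T^*$ and $T^*=\pi^*\widetilde T^*$; since $\pi^*$ is isometric, being the adjoint of a quotient map, we obtain $\|V^*y^*\|=\|S^*\widetilde T^*y^*\|\le\|S\|\,\|\widetilde T^*y^*\|=\|S\|\,\|T^*y^*\|$ for all $y^*\in Y^*$, which is precisely the asserted majorization.

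I expect $(iii)$ to be the main obstacle: $(i)$ and $(ii)$ are direct applications of the ``partial inverse'' idea combined with, respectively, a bounded projection and Hahn--Banach, whereas $(iii)$ is the only part needing a Baire-category input (the closed graph theorem) together with the structural facts that the induced map $\widetilde T$ on $X/\ker T$ is a well-defined bounded injection and that the adjoint of a quotient map is isometric; without one of these the majorization constant cannot be read off.
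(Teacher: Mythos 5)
The paper does not prove this lemma at all: it is quoted verbatim from the cited reference \cite{B} (Barnes, \emph{Proc.\ Amer.\ Math.\ Soc.} 133 (2004)), so there is no in-paper proof to compare against. Your argument is correct and is essentially a reconstruction of Barnes' original proofs: the well-defined bounded partial inverse $Q_0(Tf)=Uf$ on $R(T)$ extended by continuity and composed with a bounded projection onto $\overline{R(T)}$ for $(i)$; the Hahn--Banach extension of $\psi(Tf)=z^*(Uf)$ for $(ii)$; and the factorization $V=\widetilde TS$ through the injective operator induced on $X/\ker T$, with the closed graph theorem supplying boundedness of $S$ and the isometry of $\pi^*$ converting $\|\widetilde T^*y^*\|$ into $\|T^*y^*\|$, for $(iii)$. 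You are also right that the conclusion of $(iii)$ as printed (``$T^*$ majorizes $U^*$'') is a typo --- $U^*$ and $T^*$ do not share a domain, and the hypothesis concerns $V$, so the correct conclusion is that $T^*$ majorizes $V^*$; your reading agrees with the source. The only point worth flagging is cosmetic: in $(i)$ you should note explicitly that $P$ restricted to $\overline{R(T)}$ is the identity (which is what ``bounded projection onto $\overline{R(T)}$'' means), since that is the step where $QTf=\widetilde Q(Tf)$ is used.
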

\begin{lemma}\cite[page 201]{KA}\label{star}
If $X_d$ is a CB-space with the canonical unit vectors $e_j$, $j\in J$ then the space $X_d^\circledast:=\{\{F(e_j)\}_{j\in J} : F\in X_d^*\}$ with the norm $\|\{F(e_j)\}_{j\in J}\|_{X_d^\circledast}:=\|F\|_{X_d^*}$  is a BK-space isometrically isomorphic to $X_d^*$. Also, every continuous linear functional $\Psi$ on $X_d$ has the form
\[ \Psi(\{c_j\})=\sum_jc_jd_j, \]
where $\{d_j\}\in X_d^\circledast$ is uniquely determined by $d_j=\Psi(e_j)$ and
\[ \|\Psi\|=\|\{\Psi(e_j)\}\|_{X_d^\circledast}. \]
\end{lemma}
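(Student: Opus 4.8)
The plan is to build the isometric isomorphism $\Phi\colon X_d^*\to X_d^\circledast$ directly from the Schauder-basis structure of $X_d$ and then read off every assertion of the lemma from it. First I would set $\Phi(F):=\{F(e_j)\}_{j\in J}$ for $F\in X_d^*$; this is a scalar sequence, and $X_d^\circledast$ is by definition its range. The first point to check is that $\Phi$ is injective. If $F(e_j)=0$ for every $j$, then for an arbitrary $x=\{c_j\}\in X_d$ the CB-property gives $x=\sum_j c_j e_j$ with convergence in $\|\cdot\|_{X_d}$ (here one notes that the coefficients in the canonical-basis expansion of $x$ are exactly the $c_j$, since applying the continuous $k$-th coordinate functional of the BK-space $X_d$ to $x=\sum_j a_j e_j$ forces $a_k=c_k$), so continuity and linearity of $F$ give $F(x)=\lim_n\sum_{j\le n}c_j F(e_j)=0$, hence $F=0$. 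Injectivity makes the prescription $\|\Phi(F)\|_{X_d^\circledast}:=\|F\|_{X_d^*}$ a well-defined norm, and by construction $\Phi$ is then a linear isometry onto $X_d^\circledast$; since $X_d^*$ is a Banach space, so is $X_d^\circledast$.

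Next I would verify that $X_d^\circledast$ is a BK-space, i.e. that for each $j$ the coordinate functional $\{d_k\}_k\mapsto d_j$ is continuous on it. Transported through $\Phi$, this functional is $F\mapsto F(e_j)$, which is evaluation at the fixed vector $e_j\in X_d$ and hence bounded, with norm at most $\|e_j\|_{X_d}$; continuity follows. This finishes the statement that $X_d^\circledast$ is a BK-space isometrically isomorphic to $X_d^*$.

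For the representation statement, take $\Psi\in X_d^*$ and set $d_j:=\Psi(e_j)$, so that $\{d_j\}=\Phi(\Psi)\in X_d^\circledast$. For $x=\{c_j\}\in X_d$ the CB-expansion $x=\sum_j c_j e_j$ together with continuity of $\Psi$ yields $\Psi(\{c_j\})=\sum_j c_j\Psi(e_j)=\sum_j c_j d_j$, which is the claimed formula; the series converges because the partial sums $\Psi\big(\sum_{j\le n}c_j e_j\big)$ converge. Uniqueness of $\{d_j\}$ is immediate: evaluating $\Psi(\{c_j\})=\sum_j c_j d_j'$ at $x=e_k$ forces $d_k'=\Psi(e_k)=d_k$. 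Finally, the norm identity $\|\Psi\|=\|\{\Psi(e_j)\}\|_{X_d^\circledast}$ is exactly the definition of the norm on $X_d^\circledast$ applied to the element $\Phi(\Psi)$.

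The remaining items (linearity of $\Phi$, the triangle inequality transported along $\Phi$, completeness inherited from $X_d^*$) are mechanical. The only places that use a hypothesis rather than formal bookkeeping are the injectivity of $\Phi$, where the Schauder-basis (CB) assumption enters, and the continuity of the coordinate functionals on $X_d^\circledast$, where boundedness of evaluation at $e_j$ — guaranteed by $e_j\in X_d$ together with the BK-property — is used. I would therefore regard pinning down those two points as the heart of the argument, modest though it is.
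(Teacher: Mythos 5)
Your proof is correct. The paper does not prove this lemma at all --- it is quoted verbatim from Kantorovich--Akilov \cite[page 201]{KA} --- and your argument is the standard one that the cited source gives: the map $F\mapsto\{F(e_j)\}$ is injective precisely because the CB-hypothesis combined with the continuity of the coordinate functionals identifies the Schauder coefficients of $x=\{c_j\}$ with the $c_j$ themselves, and everything else (the transported norm, completeness, the BK-property via boundedness of evaluation at $e_j$, the representation formula and its uniqueness) follows mechanically, exactly as you lay out.
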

When $X_d^*$ is a CB-space then its canonical basis is denoted by $\{e_j^*\}$.
\section{Families of local $X_d^*$-atoms, $X_d^*$-Atomic systems, and $X_d^*$-$K$-frames}
Throughout this section we assume that $X$ is a separable Banach space which is strictly convex and $[., .]$ is a compatible semi-inner product on $X$.

In this section we discuss families of local $X_d^*$-atoms, $X_d^*$-atomic systems, and $X_d^*$-$K$-frames in Banach spaces via semi-inner products.

First of all we give the following definition.
\begin{definition}
Let $K\in B(X)$ and $X_d$ be a BK-space. The family $\{f_j\}\subseteq X$ is called an \textit{$X_d^*$-$K$-frame} for $X^*$ if $\{[f_j, f]\}\in X_d^*$, for all $f$ in $X$ and there exist $A, B>0$ such that
\[ A\|K^*f^*\|_{X^*}\leq \|\{[f_j, f]\}\|_{X_d^*}\leq B\|f^*\|_{X^*}.\]
\end{definition}
The element $A$ is called the lower $X_d^*$-$K$-frame bound and $B$ is called the upper $X_d^*$-$K$-frame bound or just upper $X_d^*$-frame bound. If the right side of this inequality holds then we say that $\{f_j\}$ is an $X_d^*$-Bessel sequence for $X^*$. Especially, when $K=I$, the identity operator on $X$, then $\{f_j\}$ is called $X_d^*$-frame for $X^*$.

Now we present our definition of families of local $X_d^*$-atoms for Banach spaces. Actually we shall generalize the classical theory of families of local atoms for Hilbert spaces to Banach spaces  via semi-inner products.
\begin{definition}\label{def}
Let $\{f_j\}\subseteq X$ be an $X_d^*$-Bessel sequence for $X^*$ and let $X_0$ be a closed subspace of $X$. We call $\{f_j\}$ is a \textit{family of local $X_d^*$-atoms} for $X_0^*$ if there exists a linear functional $\{\mu_j\}\subseteq X_0^*$ such that\\
$(i)$ $\{\mu_j(f)\}\in X_d$ and there exists $C>0$ such that $\|\{\mu_j(f)\}\|_{X_d}\leq C\|f\|_X$,\\
$(ii)$ $f=\sum_j\mu_j(f)f_j$,\\
 for all $f\in X_0$.
\end{definition}
Now for the sequence $\{\mu_j\}\subseteq X_0$, by the Hahn-Banach theorem there exists $\{h_j\}\subseteq X^*$ for which $\|h_j\|=\|\mu_j\|$ and $h_j\mid_{X_0^*}=\mu_j$. But by the duality map from $X$ to $X^*$, $h_j=g_j^*$ for some $g_j\in X$. So from the condition $\mu_j(f)=[f, g_j]$, $(i)$ and $(ii)$ in Definition \ref{def} is equivalent to say that \\
$(i)'$ $\{g_j^*\}$ is an $X_d$-Bessel sequence for $X_0$,\\
$(ii)'$ $f=\sum_j[f, g_j]f_j$,\\
 for all $f\in X_0$.
\begin{proposition}
 Let $\{f_j\}\subseteq X$ be a family of local $X_d^*$-atoms for $X_0^*$. Then $\{f_j\}$ is an $X_d^*$-frame for $X_0^*$.
\end{proposition}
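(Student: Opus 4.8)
The plan is to establish the two $X_d^*$-frame inequalities separately: the upper bound is inherited for free from the Bessel hypothesis, while the lower bound comes from feeding the reconstruction formula $(ii)'$ into the first slot of the semi-inner product and then invoking the duality pairing between $X_d$ and $X_d^*$ supplied by Lemma \ref{star}.

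For the upper bound there is essentially nothing to prove: $\{f_j\}$ is by assumption an $X_d^*$-Bessel sequence for $X^*$, so $\{[f_j,f]\}\in X_d^*$ for every $f\in X$ --- in particular for every $f\in X_0$ --- and $\|\{[f_j,f]\}\|_{X_d^*}\le B\|f^*\|_{X^*}=B\|f\|_X$. Since $X_0$ is a closed subspace of $X$ carrying the restricted semi-inner product, for $f\in X_0$ the dual element of $f$ relative to $X_0$ is $f^*\!\mid_{X_0}$ and has $X_0^*$-norm equal to $\|f\|_X$; hence the displayed estimate is exactly the upper $X_d^*$-frame inequality for $X_0^*$.

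For the lower bound, fix $f\in X_0$. By $(ii)'$ we may write $f=\sum_j[f,g_j]f_j$ with the series converging in $X$. Applying the bounded linear functional $f^*=[\,\cdot\,,f]$, which is linear and continuous in its first argument, and passing it through the series, we obtain
\[ \|f\|_X^2=[f,f]=\Bigl[\sum_j[f,g_j]f_j,\ f\Bigr]=\sum_j[f,g_j]\,[f_j,f]. \]
Now $\{[f,g_j]\}=\{\mu_j(f)\}\in X_d$ with $\|\{\mu_j(f)\}\|_{X_d}\le C\|f\|_X$ by $(i)$, while $\{[f_j,f]\}\in X_d^*$; the canonical duality pairing of Lemma \ref{star} therefore gives
\[ \|f\|_X^2=\Bigl|\sum_j[f,g_j]\,[f_j,f]\Bigr|\le\|\{[f,g_j]\}\|_{X_d}\,\|\{[f_j,f]\}\|_{X_d^*}\le C\|f\|_X\,\|\{[f_j,f]\}\|_{X_d^*}. \]
Dividing by $C\|f\|_X$ when $f\ne0$, and noting the case $f=0$ is trivial, yields $\tfrac1C\|f\|_X\le\|\{[f_j,f]\}\|_{X_d^*}$. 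Thus $A:=1/C$ serves as a lower $X_d^*$-frame bound, and combined with the upper bound above this shows $\{f_j\}$ is an $X_d^*$-frame for $X_0^*$.

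The only delicate points are bookkeeping ones: one must justify moving the functional $f^*$ inside the $X$-convergent series $\sum_j[f,g_j]f_j$ (this uses linearity and continuity of $f^*$ in its first slot --- crucially not the non-additive second slot), and one must be sure that for $\{c_j\}\in X_d$ and $\{d_j\}\in X_d^*$ one has $|\sum_j c_jd_j|\le\|\{c_j\}\|_{X_d}\|\{d_j\}\|_{X_d^*}$, which is precisely the content of the isometric identification $X_d^*\cong X_d^\circledast$ in Lemma \ref{star}. Everything else is a direct computation, so I do not expect any genuine obstacle here.
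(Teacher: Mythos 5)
Your proof is correct and follows essentially the same route as the paper's: the lower bound is obtained by expanding $[f,f]$ via the reconstruction formula $f=\sum_j[f,g_j]f_j$, pulling the functional $[\,\cdot\,,f]$ through the series, and applying the duality pairing of Lemma \ref{star} together with the Bessel-type bound on $\{[f,g_j]\}$ to get $\|f\|_X^2\le C\|f\|_X\|\{[f_j,f]\}\|_{X_d^*}$. The only cosmetic difference is that you spell out the (trivial) upper bound and the $f=0$ case, which the paper leaves implicit.
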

\begin{proof}
It is enough to show that $\|\{[f_j, f]\}\|_{X_d^*}\geq A\|f^*\|_{X_0^*}$, for some $A>0$. By definition of family of local $X_d^*$-atoms there exists a linear functional $\{g_j^*\}\subseteq X_0^*$ such that $f=\sum_j[f, g_j]f_j$ when $\{[f, g_j]\}\in X_d$ and $\|\{[f, g_j]\}\|_{X_d}\leq C\|f^*\|_{X_0^*}$ for some $C>0$ and for any $f^*$ in $X_0^*$. So
\begin{eqnarray}\label{1}
\|f\|^2&=&[f, f]=[\sum_j[f, g_j]f_j, f]=\sum_j[f, g_j][f_j, f]\nonumber\\
&\leq& \|\{[f, g_j]\}\|_{X_d}\|\{[f_j, f]\}\|_{X_d^*}\leq C\|f^*\| \|\{[f_j, f]\}\|_{X_d^*}.
\end{eqnarray}
This implies that $\|\{[f_j, f]\}\|_{X_d^*}\geq \frac{1}{C}\|f^*\|_{X_0^*}$. Thus $\{f_j\}$ is an $X_d^*$-frame for $X_0^*$ with lower $X_d^*$-frame bound $\frac{1}{C}$.
\end{proof}
To generalize the concept of a family of local $X_d^*$-atoms, we state the following lemma.
\begin{lemma}\label{bess}
Let $X_d$ be a CB-space and $\{f_j\}\subseteq X$. If $\sum_jb_jf_j$ converges in $X$, for all $b=\{b_j\}$ in $X_d$ and
$\|\sum_jb_jf_j\|_X\leq B\|b\|_{X_d}$, then $\{f_j\}$ is an $X_d^*$-Bessel sequence for $X^*$ with bound $B$.
\end{lemma}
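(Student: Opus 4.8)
The plan is to verify the two defining conditions of an $X_d^*$-Bessel sequence for $X^*$ directly from the hypotheses, using the duality between $X_d$ and $X_d^\circledast$ provided by Lemma \ref{star}. First I would show that $\{[f_j, f]\}\in X_d^*$ for every $f\in X$. Consider the synthesis-type operator $T\colon X_d\to X$ defined by $T b = \sum_j b_j f_j$; by hypothesis this is well defined and bounded with $\|T\|\le B$. Fix $f\in X$ and define a functional $\Psi_f$ on $X_d$ by $\Psi_f(b) = [T b, f] = [\sum_j b_j f_j, f]$. Since the first variable of a semi-inner product is linear and continuous (indeed $|[Tb,f]|\le \|Tb\|_X\|f\|_X \le B\|b\|_{X_d}\|f\|_X$), the functional $\Psi_f$ is bounded and linear on $X_d$ with $\|\Psi_f\|\le B\|f\|_X$.

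Next I would invoke Lemma \ref{star}: since $X_d$ is a CB-space with canonical basis $\{e_j\}$, every bounded linear functional $\Psi_f$ has the representation $\Psi_f(\{c_j\}) = \sum_j c_j d_j$ with $\{d_j\} = \{\Psi_f(e_j)\}\in X_d^\circledast$ and $\|\{d_j\}\|_{X_d^\circledast} = \|\Psi_f\|$. Computing the coefficients: $d_j = \Psi_f(e_j) = [T e_j, f] = [f_j, f]$, because $T e_j = f_j$ (the image of the $j$-th canonical vector is the $j$-th element). Hence $\{[f_j, f]\}\in X_d^\circledast$, and since $X_d^\circledast$ is isometrically isomorphic to $X_d^*$ (again Lemma \ref{star}), we may identify $\{[f_j,f]\}$ with an element of $X_d^*$ and conclude
\[
\|\{[f_j, f]\}\|_{X_d^*} = \|\{d_j\}\|_{X_d^\circledast} = \|\Psi_f\| \le B\|f\|_X = B\|f^*\|_{X^*},
\]
where the last equality uses $\|f^*\|_{X^*}=\|f\|_X$ from the strict convexity hypothesis in force throughout the section. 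This is precisely the upper $X_d^*$-frame inequality with bound $B$, so $\{f_j\}$ is an $X_d^*$-Bessel sequence for $X^*$ with bound $B$.

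I expect the main (minor) obstacle to be the careful bookkeeping of the identification $X_d^* \cong X_d^\circledast$ and making sure the sequence $\{[f_j,f]\}$ is being viewed in the right space: Lemma \ref{star} is stated in terms of $X_d^\circledast$, whereas the definition of $X_d^*$-Bessel sequence literally asks for membership in $X_d^*$, so the isometric isomorphism must be cited explicitly to close this gap. Everything else is a routine unwinding of definitions — linearity and continuity in the first slot of the semi-inner product, and the fact that $Te_j = f_j$. One should also note, for safety, that because the series $\sum_j b_j f_j$ converges for every $b\in X_d$ and the canonical vectors span a dense subspace of the CB-space $X_d$, the bound $\|Tb\|_X\le B\|b\|_{X_d}$ automatically extends from finitely supported $b$ to all of $X_d$, so $T$ is genuinely bounded on all of $X_d$; this is where the CB-space hypothesis is essential.
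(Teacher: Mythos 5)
Your proposal is correct and follows essentially the same route as the paper: the functional $\Psi_f = f^*\circ T$ you construct is exactly the adjoint $T^*f^*$ that the paper uses, and both arguments then identify $\{[f_j,f]\}$ with this element of $X_d^*$ via Lemma \ref{star} to get the bound $B\|f\|_X$. (Your closing remark about extending the bound from finitely supported sequences is unnecessary here, since the hypothesis already asserts convergence and the inequality for all $b\in X_d$.)
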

\begin{proof}
Since $\sum_jb_jf_j$ converges in $X$ for every $b=\{b_j\}$ in $X_d$, we can define the operator $U:X_d\to X$ by $U(\{b_j\})=\sum_jb_jf_j$.
Then we have $\|U\|\leq B$, $U(e_j)=f_j$ and its dual operator is a bounded operator which is defined by $U^*:X^*\to X_d^*$ satisﬁes:
\[ U^*(f^*)(e_j)=f^*(U(e_j))=f^*(f_j)=[f_j, f].\]
So by Lemma \ref{star}, $\{U^*(f^*)(e_j)\}=\{[f_j, f]\}$ is identified with $U^*f^*$. Therefore, $\|\{[f_j, f]\}\|_{X_d^*}=\|U^*f^*\|\leq \|U\|~\|f\|\leq B~\|f\|$. It means, $\{f_j\}$ is an $X_d^*$-Bessel sequence for $X^*$ with bound $B$.
\end{proof}
The generalization of a family of local $X_d^*$-atoms is given below.
\begin{definition}
Let $\{f_j\}\subseteq X$ and $K\in B(X)$. $\{f_j\}$ is called an \textit{$X_d^*$-atomic system} for $X^*$ with respect to $K$ if\\
$(i)$ $\sum_jb_jf_j$ converges in $X$ for all $b=\{b_j\}$ in $X_d$ and there exists $B>0$ such that $\|\sum_jb_jf_j\|_X\leq B\|b\|_{X_d}$;\\
$(ii)$ There exists $C>0$ such that for every $f\in X$ there exists $a_f=\{a_j\}\in X_d$ such that $\|a_f\|\leq C\|f\|$ and $Kf=\sum_ja_jf_j$.
\end{definition}
Indeed, every family of local $X_d^*$-atoms is an $X_d^*$-atomic system. Note that the part $(i)$ says that $\{f_j\}$ is an $X^*_d$-Bessel sequence for $X^*$ by Lemma \ref{bess}.
One of the most important results of this paper is given below. Actually, this is a relation between $X_d^*$-atomic systems and $X^*_d$-$K$-frames. Besides, a new reconstruction is obtained.
\begin{theorem}
Let $X_d$ be a CB-space and $\{f_j\}$ be an $X^*_d$-Bessel sequence for $X^*$. Suppose that $T:X^* \to X_d^*$ is given by $T(f^*)=\{[f_j, f]\}$ and $\overline{R(T)}$ is complemented in $X_d^*$. Then the following statements are equivalent:\\
$(i)$ $\{f_j\}$ is an $X_d^*$-atomic system for $X^*$ with respect to $K$;\\
$(ii)$ $\{f_j\}$ is an $X_d^*$-$K$-frame for $X^*$;\\
$(iii)$ $\{f_j\}$ is an $X_d^*$-Bessel sequence for $X^*$ and there exists an $X_d$-Bessel sequence $\{g_j^*\}$ for $X$ such that for any $f^*\in X^*$, we have
\[ K^*f^*=\sum_j[f_j, f]g_j^*.\]
\end{theorem}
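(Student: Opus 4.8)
The plan is to run the cycle $(i)\Rightarrow(ii)\Rightarrow(iii)\Rightarrow(i)$, using the synthesis operator $U\colon X_d\to X$, $U(\{b_j\}):=\sum_j b_j f_j$, as the pivot. The preliminary observation is that, because $X_d$ is a CB-space and $\{f_j\}$ is an $X_d^*$-Bessel sequence, the series $\sum_j b_jf_j$ automatically converges in $X$ for every $b=\{b_j\}\in X_d$, with $\|\sum_j b_jf_j\|_X\le B\|b\|_{X_d}$: realizing $\{[f_j,f]\}$ as a functional on $X_d$ via Lemma \ref{star}, a tail satisfies $\|\sum_{m\le j\le n}b_jf_j\|_X=\sup_{\|f^*\|\le1}|\sum_{m\le j\le n}b_j[f_j,f]|\le B\,\|\sum_{m\le j\le n}b_je_j\|_{X_d}\to0$. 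Hence $U$ is well defined and bounded, $U(e_j)=f_j$, and—exactly as in the proof of Lemma \ref{bess}—$U^\ast=T$ under the identification $X_d^\ast\cong X_d^\circledast$. In particular condition $(i)$ in the definition of an $X_d^*$-atomic system is a consequence of the standing Bessel hypothesis, so only the ``existence of coefficients'' carries content, and the synthesis issue in $(iii)\Rightarrow(i)$ is already settled.

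For $(i)\Rightarrow(ii)$ the upper bound is the Bessel hypothesis. For the lower bound, the relation $Kf=\sum_j a_{f,j}f_j$ shows $R(K)\subseteq R(U)$, so Lemma \ref{maj}$(iii)$ (applied with $T=U$, $V=K$) gives $\|K^*f^*\|_{X^*}\le\gamma\|U^*f^*\|_{X_d^*}=\gamma\|\{[f_j,f]\}\|_{X_d^*}$, i.e.\ $A=1/\gamma$ is a lower $X_d^*$-$K$-frame bound; alternatively one estimates $|\langle K^*f^*,g\rangle|=|\sum_j a_{g,j}[f_j,f]|\le C\|g\|_X\|\{[f_j,f]\}\|_{X_d^*}$ and takes the supremum over $\|g\|_X\le1$.

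For $(ii)\Rightarrow(iii)$, the lower $X_d^*$-$K$-frame inequality says precisely that $T$ majorizes $K^*$, and $\overline{R(T)}$ is complemented by hypothesis, so Lemma \ref{maj}$(i)$ furnishes $Q\in B(X_d^*,X^*)$ with $K^*=QT$. Writing $e_j^*\in X_d^*$ for the coordinate functionals of $X_d$, set $g_j^*:=Q(e_j^*)$ and let $g_j\in X$ be the unique vector (strict convexity) with $g_j^*(f)=[f,g_j]$. With $P_n\colon X_d\to X_d$ the partial-sum projections (so $\|P_n\|\le M$ and $P_n^*c=\sum_{j\le n}c_je_j^*$) one has $\sum_{j\le n}[f_j,f]g_j^*=Q(P_n^*Tf^*)$ and $\langle Q(P_n^*c),f\rangle=\sum_{j\le n}c_j[f,g_j]$, giving the uniform bounds $\|\sum_{j\le n}[f_j,f]g_j^*\|_{X^*}\le M\|Q\|B\|f^*\|$ and $\|\{[f,g_j]\}_{j\le n}\|_{X_d}\le M\|Q\|\|f\|_X$; passing to the limit then yields that $\{g_j^*\}$ is an $X_d$-Bessel sequence for $X$ and that $\sum_j[f_j,f]g_j^*=Q(Tf^*)=K^*f^*$. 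Finally, for $(iii)\Rightarrow(i)$ it remains only to exhibit the coefficients: take $a_f:=\{g_j^*(f)\}=\{[f,g_j]\}\in X_d$, with $\|a_f\|_{X_d}\le C\|f\|_X$ since $\{g_j^*\}$ is $X_d$-Bessel for $X$; testing $K^*h^*=\sum_j[f_j,h]g_j^*$ against $f$ gives $\langle h^*,Kf\rangle=[Kf,h]=\sum_j[f_j,h][f,g_j]=\langle h^*,\sum_j a_{f,j}f_j\rangle$ for all $h^*\in X^*$, so $\sum_j a_{f,j}f_j$ converges weakly to $Kf$, and since it already converges in norm its limit is $Kf$.

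The step I expect to be genuinely delicate is the passage to the limit inside $(ii)\Rightarrow(iii)$: one has uniform-in-$n$ norm bounds on the partial sums $\sum_{j\le n}[f_j,f]g_j^*$ and $\{[f,g_j]\}_{j\le n}$, but to conclude that $\{[f,g_j]\}\in X_d$ and that the reconstruction series converges in $X^*$ one must upgrade these bounds to actual convergence. This is exactly where the CB-structure of $X_d$ must be used in full—the uniform boundedness of the projections $P_n$ together with the way $Q$ and $T=U^*$ intertwine with them—and it is the only point in the argument requiring more than routine functional analysis.
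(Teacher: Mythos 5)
Your proposal follows the same route as the paper: the cycle $(i)\Rightarrow(ii)\Rightarrow(iii)\Rightarrow(i)$ pivoting on the synthesis operator $U$ with $U^*=T$; the direct estimate $|\langle K^*f^*,g\rangle|=|\sum_j a_{g,j}[f_j,f]|\le C\|g\|\,\|\{[f_j,f]\}\|_{X_d^*}$ for $(i)\Rightarrow(ii)$ (your Barnes--lemma variant via $R(K)\subseteq R(U)$ is an equivalent repackaging); the factorization $K^*=QT$ from Lemma~\ref{maj}$(i)$ and $g_j^*:=Qe_j^*$ for $(ii)\Rightarrow(iii)$; and for $(iii)\Rightarrow(i)$ the choice $a_f=\{[f,g_j]\}$ together with the weak identity $\langle h^*,Kf\rangle=\sum_j[f_j,h][f,g_j]$, the norm convergence of $\sum_j a_{f,j}f_j$ having already been secured. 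All of that matches the paper step for step.

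The one genuine gap is the limit passage in $(ii)\Rightarrow(iii)$, which you correctly isolate but do not close. Uniform boundedness of the partial sums $\sum_{j\le n}[f_j,f]g_j^*=QP_n^*Tf^*$ and of $\{[f,g_j]\}_{j\le n}$ does not by itself yield norm convergence of the series in $X^*$, nor membership of $\{[f,g_j]\}$ in $X_d$ (which is what the $X_d$-Bessel property demands), since bounded partial sums need not converge in a Banach space. What is actually needed is that $P_n^*\to I$ strongly on $X_d^*$, i.e.\ that the coordinate functionals $\{e_j^*\}$ form a Schauder basis of $X_d^*$; then $\sum_{j\le n}[f_j,f]g_j^*=QP_n^*Tf^*\to QTf^*=K^*f^*$ in norm, and $\{[f,g_j]\}=Q^*f\in X_d^{**}=X_d$ with the desired bound. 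This holds when $X_d$ is an RCB-space (the setting of Zhang--Zhang) but not for an arbitrary CB-space (e.g.\ $X_d=\ell^1$). You should note that the paper's own proof silently makes the same assumption when it writes $T(f^*)=\sum_j[f_j,f]e_j^*$ and identifies $\|\{Q^*(g)(e_j^*)\}\|$ with an $X_d$-norm via Lemma~\ref{star}; so your proof is no weaker than the published one, but to be complete you should either add the hypothesis that $X_d$ is an RCB-space (or that $\{e_j^*\}$ is a basis of $X_d^*$) or supply an argument that works without it.
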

\begin{proof}
$(i)\Rightarrow (ii).$  For every $f^*\in X^*$, we have
\begin{eqnarray*}
\|K^*f^*\|&=&\sup_{g\in X, \|g\|=1}|K^*f^*(g)|\\
&=&\sup_{g\in X, \|g\|=1}|f^*(Kg)|=\sup_{g\in X, \|g\|=1}|[Kg, f]|,
\end{eqnarray*}
by definition of an $X_d^*$-atomic system there exists $\{b_j\}\in X_d$ such that $\|\{g_j\}\|\leq C\|g\|$, for some $C>0$, $Kg=\sum_jb_jf_j$, so
\begin{eqnarray*}
\|K^*f^*\|&=&\sup_{g\in X, \|g\|=1}|\sum_jb_j[f_j, f]|\\
&\leq& \sup_{g\in X, \|g\|=1}\|\{b_j\}\|_{X_d} \|\{[f_j, f]\}\|_{X_d^*}\leq C \|\{[f_j, f]\}\|_{X_d^*}.
\end{eqnarray*}
Therefore the lower $X_d^*$-$K$-frame bound of $\{f_j\}$ is $\frac{1}{C}$, i.e. $\frac{1}{C}\|K^*f^*\|\leq \|\{[f_j, f]\}\|_{X_d^*}$.

$(ii)\Rightarrow (iii).$ Since $T$ majorizes $K^*$ and $\overline{R(T)}$ is complemented in $X_d^*$, by Lemma \ref{maj} there exists a bounded operator $Q:X_d^*\to X^*$ such that $K^*=QT$. So
\[ K^*(f^*)=QT(f^*)=Q(\sum_j[f_j, f]e_j^*)=\sum_j[f_j, f]Qe_j^*,\]
for $Qe_j^*\in X^*$ there exists a unique $g_j\in X$ such that $g_j^*=Qe_j^*$. So by Lemma \ref{star} we have,
\[ \|\{[g_j^*, g^*]\}\|_{X_d}=\|\{g(g_j^*)\}\|=\|\{g(Q(e_j^*)\}\|=\|\{Q^*(g)(e_j^*)\}\|=\|Q^*(g)\|\leq \|Q\| \|g\|_{X}, \]
it means, $\{g_j^*\}$ is an  $X_d$-Bessel sequence for $X$.

$(iii)\Rightarrow (i).$ Suppose that $\{f_j\}$ is an $X_d^*$-Bessel sequence for $X^*$ with bound $B$ and $b=\{b_j\}\in X_d$. We estimate for positive integers $m>n$ that
\begin{eqnarray}\label{con}
\|\sum_{j\in J_m\backslash J_n}b_jf_j\|&=&\sup_{f^*\in X^*, \|f^*\|\leq1}|f^*(\sum_{j\in J_m\backslash J_n}b_jf_j)|\nonumber\\
&=&\sup_{f^*\in X^*, \|f^*\|\leq1}|\sum_{j\in J_m\backslash J_n}b_j[f_j, f]|\nonumber\\
&\leq& \|\sum_{j\in J_m\backslash J_n}b_je_j\|_{X_d}\sup_{f^*\in X^*, \|f^*\|\leq1}\|\{[f_j, f]\}\|_{X_d^*}\nonumber\\
&\leq& B \|\sum_{j\in J_m\backslash J_n}b_je_j\|_{X_d},
\end{eqnarray}
as $e_j$ form a Schauder basis for $X_d$, $\|\sum_{j\in J_m\backslash J_n}b_je_j\|_{X_d}$ goes to zero as $m, n$ tend to infinity. As a result $\sum_{j\in J}b_jf_j$ converges in $X$.

Using the same technique as that engaged in (\ref{con}), we obtain
\[ \|\sum_jb_jf_j\|_X\leq B \|b\|_{X_d}. \]
Now assume that $g\in X$ and $f^*\in X^*$. Then we have
\begin{eqnarray*}
(Kg)(f^*)&=&g(K^*f^*)=[K^*f^*, g^*]_*=[g, (K^*f^*)^*]\\
&=&(K^*f^*)(g)=(\sum_j[f_j, f]g_j^*)(g)=\sum_j[f_j, f][g, g_j]\\
&=&\sum_j[g, g_j][f^*, f_j^*]_*=\sum_j[g, g_j]f_j(f^*).
\end{eqnarray*}
Therefore $Kf=\sum_ja_jf_j$, when $a_f=\{a_j\}=\{[f, g_j]\}$. Note that, since $\{g_j^*\}$ is an $X_d$-Bessel sequence for $X$, there exists $C>0$ such that $\|a_f\|\leq C\|f\|$.
\end{proof}
Now we present an example in order to describe our work.
\begin{example}
Consider the space $X:=\ell^{\frac{3}{2}}(\mathbb{N}_3)$ with the semi-inner product
\[ [g, h]:=\|g\|^{\frac{1}{2}}\sum_{j=1}^3g_j\overline{h_j}|h_j|^{-\frac{1}{2}}, \quad g, h\in X, \]
and the following sequence in $X$:
\[ f_1=e_1, f_2=e_2, f_3=0. \]
We can easily show that for BK-space $X_d^*:=\ell^3(\mathbb N)$, $\{f_1^*=e_1, f_2^*=e_2, f_3^*=0\}$ is not an $X_d^*$-frame for $X^*$. Now we define a bounded linear operator $K^*:X^*\to X^*$ as follows:
\[ K^*e_1=e_1, K^*e_2=e_2, K^*e_3=0. \]
We show that $\{f_1^*, f_2^*, f_3^*\}$ is an $X_d^*$-$K$-frame for $X^*$. In so doing, we have
\[ \|K^*f^*\|_{\ell^3(\mathbb N)}=\|c_1e_1+c_2e_2\|_{\ell^3(\mathbb N)}=(|c_1|^3+|c_2|^3)^{\frac{1}{3}}\leq \|\{[f_1, f], [f_2, f], [f_3, f]\}\|_{\ell^3(\mathbb N)}, \]
where $f^*=c_1e_1+c_2e_2+c_3e_3$, for some $c_1, c_2, c_3\in \mathbb{C}$.
\end{example}
A characterization of an $X_d^*$-$K$-frame is given below.
\begin{theorem}
Let $X_d$ be a CB-space. Then $\{f_j\}$ is an $X_d^*$-$K$-frame if and only if there exists a bounded linear operator $U:X_d \to X$ such that $Ue_j=f_j$ and $R(K)\subseteq R(U)$.
\end{theorem}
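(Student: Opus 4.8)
The plan is to treat this as the Banach-space, semi-inner-product version of the familiar characterization of $K$-frames through the synthesis operator. Two identifications drive everything. If $U: X_d\to X$ is bounded with $Ue_j=f_j$, then, since $\{e_j\}$ is a Schauder basis of $X_d$, for $b=\{b_j\}\in X_d$ we have $b=\sum_j b_je_j$, hence $Ub=\sum_j b_jf_j$ (convergent in $X$) and $\|Ub\|\le\|U\|\,\|b\|_{X_d}$; conversely, as in the proof of Lemma \ref{bess}, the adjoint of such a $U$ is $U^*f^*=\{[f_j,f]\}\in X_d^*$ (identified via Lemma \ref{star}), with $\|U^*f^*\|_{X_d^*}=\|\{[f_j,f]\}\|_{X_d^*}$. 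Thus the two frame inequalities amount respectively to ``$\{f_j\}$ is an $X_d^*$-Bessel sequence with bound $\|U\|$'' and ``$U^*$ majorizes $K^*$''.

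For $(\Leftarrow)$, given such a $U$, the convergence and norm bound just recorded together with Lemma \ref{bess} make $\{f_j\}$ an $X_d^*$-Bessel sequence for $X^*$ with bound $\|U\|$, which is the upper inequality. For the lower one, the hypothesis $R(K)\subseteq R(U)$ and Lemma \ref{maj}(iii) (applied with $T=U$, $V=K$, both mapping into $X$) produce a $\gamma>0$ with $\|K^*f^*\|_{X^*}\le\gamma\,\|U^*f^*\|_{X_d^*}=\gamma\,\|\{[f_j,f]\}\|_{X_d^*}$ for all $f^*\in X^*$; hence $\{f_j\}$ is an $X_d^*$-$K$-frame with bounds $1/\gamma$ and $\|U\|$.

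For $(\Rightarrow)$, let $\{f_j\}$ be an $X_d^*$-$K$-frame with bounds $A,B$, so in particular an $X_d^*$-Bessel sequence. First I would build the synthesis operator $U$: exactly as in the proof of $(iii)\Rightarrow(i)$ of the preceding theorem, for $m>n$,
\[
\Big\|\sum_{n<j\le m}b_jf_j\Big\|_X=\sup_{\|f^*\|\le1}\Big|\sum_{n<j\le m}b_j[f_j,f]\Big|\le B\,\Big\|\sum_{n<j\le m}b_je_j\Big\|_{X_d},
\]
and the right-hand side tends to $0$ since $\{e_j\}$ is a Schauder basis of $X_d$; so the partial sums are Cauchy, $U(\{b_j\}):=\sum_j b_jf_j$ is a well-defined linear map $X_d\to X$, bounded with $\|Ub\|\le B\|b\|_{X_d}$ by the same estimate, and $Ue_j=f_j$. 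Since $U^*f^*=\{[f_j,f]\}$, the lower frame inequality $A\|K^*f^*\|\le\|\{[f_j,f]\}\|_{X_d^*}=\|U^*f^*\|$ says precisely that $U^*$ majorizes $K^*$.

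The step I expect to be the real obstacle is then passing from ``$U^*$ majorizes $K^*$'' to the range inclusion $R(K)\subseteq R(U)$: in Hilbert space this is immediate from Douglas' lemma, but adjoint majorization is genuinely weaker in the Banach setting. A Hahn--Banach separation still yields the inclusion up to closure: if $Kx\notin\overline{R(U)}$ for some $x\in X$, choose $f^*\in X^*$ vanishing on $R(U)$ with $f^*(Kx)\ne0$; then $U^*f^*=0$, so $K^*f^*=0$ by the majorization, so $f^*(Kx)=(K^*f^*)(x)=0$, a contradiction — hence $R(K)\subseteq\overline{R(U)}$. To sharpen this to $R(K)\subseteq R(U)$ I would bring in the complementation hypothesis already present in the preceding theorem, namely that $\overline{R(U^*)}$ (which is $\overline{R(T)}$ there) be complemented in $X_d^*$: then Lemma \ref{maj}(i) gives a bounded $Q: X_d^*\to X^*$ with $K^*=QU^*$, so $K^*f^*=\sum_j[f_j,f]\,Qe_j^*$; writing $Qe_j^*=g_j^*$ with $g_j\in X$ the unique element furnished by the duality map (bijective by strict convexity of $X$) and unwinding the semi-inner-product identities as in that theorem, one obtains $Kg=\sum_j[g,g_j]f_j=U(\{[g,g_j]\})$ with $\{[g,g_j]\}\in X_d$ (boundedness of $Q$ making $\{g_j^*\}$ an $X_d$-Bessel sequence for $X$), whence $Kg\in R(U)$ for every $g\in X$, i.e.\ $R(K)\subseteq R(U)$.
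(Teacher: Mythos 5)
Your backward direction and your construction of the synthesis operator $U$ in the forward direction are exactly the paper's argument: Lemma \ref{bess} (resp.\ the Cauchy estimate on partial sums) gives the upper bound and $Ue_j=f_j$, the identification $U^*f^*=\{[f_j,f]\}$ converts the frame inequalities into ``$U^*$ majorizes $K^*$'', and Lemma \ref{maj}(iii) handles $R(K)\subseteq R(U)\Rightarrow$ lower bound. No issues there.

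Where you diverge is the step you correctly single out as the crux: passing from $\|K^*f^*\|\le A^{-1}\|U^*f^*\|$ to $R(K)\subseteq R(U)$. The paper disposes of this in one line, ``by Lemma \ref{maj} we have $R(K)\subseteq R(U)$''; but the only applicable part of that lemma, (ii), applied to the operators $U^*, K^*\in B(X^*,\cdot)$, yields $R(K^{**})\subseteq R(U^{**})$, and descending from second adjoints to $R(K)\subseteq R(U)$ requires reflexivity of $X_d$ (or some closed-range/tauberian hypothesis) that the statement does not assume. Your treatment is more honest: the Hahn--Banach separation giving $R(K)\subseteq\overline{R(U)}$ is correct and unconditional, and your sharpening via Lemma \ref{maj}(i) and the factorization $K^*=QU^*$ is a valid way to land in $R(U)$ itself --- but only because you import the hypothesis that $\overline{R(U^*)}$ is complemented in $X_d^*$, which appears in the preceding theorem and \emph{not} in this one. (Your sharpening also tacitly uses $Q(\sum_j[f_j,f]e_j^*)=\sum_j[f_j,f]Qe_j^*$, i.e.\ that $U^*f^*$ lies in the closed span of $\{e_j^*\}$, which for a non-reflexive CB-space is a further unproved assumption; the paper commits the same sin in $(ii)\Rightarrow(iii)$ of its main theorem.) So: you have not proved the theorem as stated, but you have identified precisely why the stated forward implication is problematic and supplied a correct proof under an added complementation hypothesis, whereas the paper's own proof papers over the same gap. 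If you present this, state the extra hypothesis explicitly in the theorem, or else weaken the conclusion of the forward direction to $R(K)\subseteq\overline{R(U)}$.
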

\begin{proof}
Since $\{f_j\}$ is an $X_d^*$-$K$-frame, we can define $U:X_d \to X$ by $U(c)=\sum_jc_jf_j$, $c=\{c_j\}\in X_d$. Therefore $Ue_j=f_j$ and $U$ is bounded, i.e. $\|U\|\leq B$, where $B$ is the upper $X_d^*$-$K$-frame bound of $\{f_j\}$. By the similar way of Lemma \ref{bess} $\{U^*(f^*)(e_j)\}=\{[f_j, f]\}$ is identified with $U^*f^*$ for every $f^*\in X^*$. Therefore
\[ \|\{[f_j, f]\}\|_{X_d^*}=\|U^*f^*\|\leq B \|f^*\|_{X^*}. \]
Now by $\|K^*f^*\|\leq \|\{[f_j, f]\}\|_{X_d^*}=\|U^*f^*\|$ and Lemma \ref{maj} we have $R(K)\subseteq R(U)$.

Conversely, by the similar way $\{U^*(f^*)(e_j)\}=\{[f_j, f]\}$ is identified with $U^*f^*$, $f^*\in X^*$. Therefore
\[ \|\{[f_j, f]\}\|_{X_d^*}=\|U^*f^*\|\leq \|U\| \|f^*\|_{X^*}. \]
Since $R(K)\subseteq R(U)$ then by Lemma \ref{maj} there exists $A>0$ such that $A\|K^*f^*\|\leq \|U^*f^*\|=\|U^*(f^*)(e_j)\|=\|\{[f_j, f]\}\|$. It means, $\{f_j\}$ is an $X_d^*$-$K$-frame.
\end{proof}
In the following part some results are given.
\begin{proposition}\label{prop k}
Suppose that $\{f_j\}$ is an $X_d^*$-frame for $X^*$ and $Q\in B(X)$. Then $\{Qf_j\}$ is an $X_d^*$-frame for $X^*$ if and only if $Q^*$ is bounded below.
\end{proposition}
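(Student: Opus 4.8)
The plan is to reduce everything to a single pointwise identity. Fix $f\in X$ and let $g_f\in X$ be the unique vector with $g_f^*=Q^*f^*$, which exists because $X$ is strictly convex, so the duality map $h\mapsto h^*$ is a bijection of $X$ onto $X^*$; moreover $\|g_f\|_X=\|g_f^*\|_{X^*}=\|Q^*f^*\|_{X^*}$. I claim
\[ [Qf_j,f]=[f_j,g_f]\qquad\text{for all }j. \]
Indeed, using that $f^*$ is the dual element of $f$, the definition of the Banach-space adjoint $Q^*$, and that $g_f^*$ is the dual element of $g_f$,
\[ [Qf_j,f]=f^*(Qf_j)=(Q^*f^*)(f_j)=g_f^*(f_j)=[f_j,g_f]. \]
The non-additivity of the semi-inner product in its second slot causes no trouble here, since $f$ (and then $g_f$) never appears as a sum in the second argument.

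Next I would substitute $g_f$ into the $X_d^*$-frame inequalities for $\{f_j\}$. Since $\{[f_j,g_f]\}\in X_d^*$, the identity gives $\{[Qf_j,f]\}\in X_d^*$ for every $f$, and the estimate $A\|g_f^*\|_{X^*}\le\|\{[f_j,g_f]\}\|_{X_d^*}\le B\|g_f^*\|_{X^*}$ becomes
\[ A\|Q^*f^*\|_{X^*}\le\|\{[Qf_j,f]\}\|_{X_d^*}\le B\|Q^*f^*\|_{X^*},\qquad f\in X. \]
In particular, combined with $\|Q^*f^*\|_{X^*}\le\|Q^*\|\,\|f^*\|_{X^*}$, the right-hand inequality shows that $\{Qf_j\}$ is always an $X_d^*$-Bessel sequence for $X^*$ with bound $B\|Q^*\|=B\|Q\|$, whatever $Q$ is; only the lower bound is at stake.

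Both implications now fall out of the displayed estimate. If $Q^*$ is bounded below, say $\|Q^*f^*\|_{X^*}\ge c\|f^*\|_{X^*}$ with $c>0$, the lower inequality yields $\|\{[Qf_j,f]\}\|_{X_d^*}\ge Ac\|f^*\|_{X^*}$, so $\{Qf_j\}$ is an $X_d^*$-frame for $X^*$ with bounds $Ac$ and $B\|Q\|$. Conversely, if $\{Qf_j\}$ is an $X_d^*$-frame with lower bound $A'$, then $A'\|f^*\|_{X^*}\le\|\{[Qf_j,f]\}\|_{X_d^*}\le B\|Q^*f^*\|_{X^*}$, i.e. $Q^*$ is bounded below by $A'/B$. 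There is no serious obstacle in this argument: the one essential ingredient is the bijectivity and isometry of the duality map coming from strict convexity of $X$, which is precisely what lets one rewrite $Q^*f^*$ as $g_f^*$ and then recycle the frame bounds of $\{f_j\}$ verbatim.
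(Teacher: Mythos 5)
Your proposal is correct and follows essentially the same route as the paper: both hinge on the identity $[Qf_j,f]=f^*(Qf_j)=(Q^*f^*)(f_j)=[f_j,(Q^*f^*)^*]$ and then feed the element $(Q^*f^*)^*$ (your $g_f$) into the frame inequalities for $\{f_j\}$ to get $A\|Q^*f^*\|\le\|\{[Qf_j,f]\}\|_{X_d^*}\le B\|Q^*f^*\|$, from which both implications follow. Your write-up is if anything a bit cleaner, since it derives this two-sided estimate once and reads off both directions, but the mathematics is the same.
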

\begin{proof}
Let $f\in X$ then we have
\begin{equation}\label{bb}
 [Qf_j, f]=f^*(Qf_j)=Q^*f^*(f_j)=[f_j, (Q^*f^*)^*].
\end{equation}
Let $\{f_j\}$ be an $X_d^*$-frame for $X^*$ with upper $X^*_d$-frame bound $B$ and $\{Qf_j\}$ be an $X_d^*$-frame for $X^*$ with lower $X_d^*$-frame bound $C$. By (\ref{bb}) we have
\[ C\|f^*\|\leq \|\{[Qf_j, f]\}\|=\|\{[f_j, (Q^*f^*)^*]\}\|\leq B\|Q^*f^*\|,\]
therefore $\|Q^*f^*\|\geq \frac{C}{B}\|f^*\|$, i.e. $Q^*$ is a bounded below operator.\\
Now let $\{f_j\}$ be an $X_d^*$-frame for $X^*$ with $X^*_d$-frame bounds $A$ and $B$. Then
 $\|\{[f_j, (Q^*f^*)^*]\}\|\leq B\|Q^*f^*\|\leq B\|Q\| \|f^*\|.$
On the other hand
$\|\{[f_j, (Q^*f^*)^*]\}\|\geq A\|Q^*f^*\|,$ since $Q^*$ is bounded below, there exists $\gamma>0$ such that $\|Q^*f^*\|\geq \gamma \|f^*\|$. Therefore
$\|\{[f_j, (Q^*f^*)^*]\}\|\geq A\gamma \|f^*\|$. Hence by (\ref{bb}) $\{Qf_j\}$ is an $X_d^*$-frame for $X^*$.
\end{proof}
The following two propositions are proved by the similar way of Proposition \ref{prop k}.
\begin{proposition}
Let $K\in B(X)$ and $\{f_j\}$ be an $X_d^*$-frame for $X^*$ with $X_d^*$-frame bounds $A$ and $B$, respectively. Then $\{Kf_j\}$ is an $X_d^*$-$K$-frame for $X^*$ $X_d^*$-$K$-frame bounds $A$ and $B\|K\|$, respectively.
\end{proposition}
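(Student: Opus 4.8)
The plan is to mirror the proof of Proposition \ref{prop k} with $Q=K$, while keeping careful track of the duality mapping. The entry point is identity (\ref{bb}) specialized to $Q=K$: for every $f\in X$ and each index $j$, $[Kf_j, f]=f^*(Kf_j)=(K^*f^*)(f_j)=[f_j, (K^*f^*)^*]$, so the sequence $\{[Kf_j, f]\}$ coincides with $\{[f_j, h]\}$ where $h:=(K^*f^*)^*\in X$ (this is well defined because $X$ is strictly convex, hence the duality mapping $X\to X^*$ is bijective). Since $\{f_j\}$ is an $X_d^*$-frame for $X^*$, the sequence $\{[f_j, h]\}$ lies in $X_d^*$ for every $h\in X$; in particular $\{[Kf_j, f]\}\in X_d^*$, which settles the membership requirement in the definition of an $X_d^*$-$K$-frame.

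Next I would feed this particular $h=(K^*f^*)^*$ into the defining inequalities of the $X_d^*$-frame $\{f_j\}$, namely $A\|h^*\|_{X^*}\le\|\{[f_j, h]\}\|_{X_d^*}\le B\|h^*\|_{X^*}$. On the middle term we use $\{[f_j, h]\}=\{[Kf_j, f]\}$ from the identity above; on the outer terms we use that in the strictly convex case the duality mapping is an involution between $X$ and $X^*$, so $h^*=((K^*f^*)^*)^*=K^*f^*$ and therefore $\|h^*\|_{X^*}=\|K^*f^*\|_{X^*}$. The left half then reads $A\|K^*f^*\|_{X^*}\le\|\{[Kf_j, f]\}\|_{X_d^*}$, giving $A$ as a lower $X_d^*$-$K$-frame bound; the right half, combined with $\|K^*\|=\|K\|$, reads $\|\{[Kf_j, f]\}\|_{X_d^*}\le B\|K^*f^*\|_{X^*}\le B\|K\|\,\|f^*\|_{X^*}$, giving $B\|K\|$ as an upper bound. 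Chaining the two yields $A\|K^*f^*\|_{X^*}\le\|\{[Kf_j, f]\}\|_{X_d^*}\le B\|K\|\,\|f^*\|_{X^*}$ for all $f\in X$, which is exactly the assertion.

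I do not anticipate a substantive obstacle: this is the same bookkeeping as in Proposition \ref{prop k}, plus the one extra line needed to pass from $\{f_j\}$ to $\{Kf_j\}$. The only point demanding attention is the non-additivity of the semi-inner product in its second variable, but since we never expand $[\,\cdot\,, g_1+g_2]$ and only ever substitute the single fixed vector $h=(K^*f^*)^*$ into the second slot, every step is legitimate; beyond that, the argument is purely formal.
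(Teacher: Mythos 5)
Your argument is correct and is exactly the route the paper intends: the paper states that this proposition is ``proved by the similar way of Proposition \ref{prop k},'' i.e.\ via the identity $[Kf_j,f]=[f_j,(K^*f^*)^*]$ and substitution of $h=(K^*f^*)^*$ into the frame inequalities for $\{f_j\}$, which is precisely what you do. The bookkeeping with the duality mapping ($h^*=K^*f^*$, $\|K^*\|=\|K\|$) is handled correctly, so nothing further is needed.
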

\begin{proposition}
Let $\{f_j\}$ be an $X_d^*$-$K$-frame for $X^*$. Then $\{f_j\}$ is an $X_d^*$-frame for $X^*$ if $K$ is a bounded below operator.
\end{proposition}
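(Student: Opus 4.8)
The plan is to derive the $X_d^*$-frame inequalities for $\{f_j\}$ directly from the $X_d^*$-$K$-frame inequalities, using that $K$ bounded below forces $K^*$ to be bounded below on $X^*$. First I would recall the definitions: since $\{f_j\}$ is an $X_d^*$-$K$-frame for $X^*$, we already have $\{[f_j,f]\}\in X_d^*$ for all $f\in X$, together with constants $A,B>0$ such that
\[ A\|K^*f^*\|_{X^*}\leq \|\{[f_j, f]\}\|_{X_d^*}\leq B\|f^*\|_{X^*},\qquad f\in X. \]
The upper inequality is already exactly the upper $X_d^*$-frame condition, so nothing needs to be done there; the whole content is the lower bound.

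Next I would argue that $K$ bounded below implies $K^*$ bounded below. This is a standard duality fact, but in the present setting it is cleanest to invoke Lemma \ref{maj}: if $K$ is bounded below then $K$ majorizes the identity operator $I$ (indeed $\|If\|=\|f\|\leq\gamma\|Kf\|$ for a suitable $\gamma>0$), hence $R(I)=X\subseteq R(K)$, wait — rather, from $K$ bounded below one gets that $R(I)\subseteq R(K)$ is not immediate, so instead I would use part $(iii)$ of Lemma \ref{maj} together with part $(ii)$: from $K$ majorizing $I$ we get, by Lemma \ref{maj}$(ii)$, that $R(I^*)=X^*\subseteq R(K^*)$; and applying Lemma \ref{maj}$(iii)$ with the roles reversed, $K$ majorizing $I$ yields $I^*=I$ majorizes $K^*$ — no, that is the wrong direction. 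The clean route is: $K$ bounded below $\iff$ $K$ majorizes $I$; by Lemma \ref{maj}$(ii)$ this gives $R(I^*)\subseteq R(K^*)$, i.e. $K^*$ is surjective; then apply Lemma \ref{maj}$(iii)$ to $V:=I\colon X^*\to X^*$ with $R(V)=X^*\subseteq R(K^*)$ to conclude that $(K^*)^*=K^{**}$ majorizes $I^*=I$. Restricting to $X\subseteq X^{**}$ this says $\|f\|\leq\gamma'\|K^*{}^*f\|=\gamma'\|(K^*)^*f\|$... To keep things transparent I would actually just quote the elementary fact directly: for $K\in B(X)$ on a reflexive space, $K$ is bounded below if and only if $K^*$ is surjective if and only if $K^{**}=K$... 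Since the statement only claims the conclusion under "$K$ bounded below", the efficient phrasing is to note $K$ bounded below implies there is $\gamma>0$ with $\|Kf\|\ge\gamma\|f\|$, hence $K$ majorizes $I$, hence by Lemma~\ref{maj}$(ii)$ $X^*=R(I^*)\subseteq R(K^*)$, hence $K^*$ is surjective, and then by Lemma~\ref{maj}$(iii)$ (applied with $T=K^*$, $V=I_{X^*}$) $K^{**}$ majorizes $I_{X^*}$; since $X$ is reflexive (strict convexity plus the RCB/CB hypotheses in force, or directly) this gives a constant $c>0$ with $\|K^*f^*\|_{X^*}\ge c\|f^*\|_{X^*}$ for all $f^*\in X^*$.

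Then I would finish: combining $\|K^*f^*\|_{X^*}\ge c\|f^*\|_{X^*}$ with the lower $X_d^*$-$K$-frame bound,
\[ \|\{[f_j,f]\}\|_{X_d^*}\geq A\|K^*f^*\|_{X^*}\geq Ac\,\|f^*\|_{X^*},\qquad f\in X, \]
so $\{f_j\}$ satisfies the lower $X_d^*$-frame inequality with constant $Ac$, while the upper bound $B$ is inherited unchanged. Hence $\{f_j\}$ is an $X_d^*$-frame for $X^*$ with bounds $Ac$ and $B$, as in the proof scheme of Proposition~\ref{prop k}.

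The routine parts are the two chained inequalities at the end; the only genuine obstacle is the step "$K$ bounded below $\Rightarrow$ $K^*$ bounded below," which must be handled carefully with the available tools. On a general Banach space $K$ bounded below only gives $K^*$ surjective, and one needs reflexivity of $X$ (guaranteed here, e.g. since $X$ is strictly convex with the ambient CB/RCB structure, or one can simply add uniform convexity as the paper does elsewhere) to pass from $K^*$ surjective back to $K^*=K^{**}{}^*$ bounded below via Lemma~\ref{maj}. I would make sure to state explicitly which convexity/reflexivity hypothesis is being used, since the statement as written does not repeat it, and then the rest goes through verbatim as in Proposition~\ref{prop k}.
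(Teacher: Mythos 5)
Your overall structure is the right one and matches what the paper intends (it proves this result ``by the similar way of Proposition~\ref{prop k}''): the upper bound is inherited verbatim, and the lower $X_d^*$-frame bound follows by chaining $\|\{[f_j,f]\}\|_{X_d^*}\ge A\|K^*f^*\|_{X^*}\ge Ac\|f^*\|_{X^*}$ once one has a constant $c>0$ with $\|K^*f^*\|\ge c\|f^*\|$. The genuine gap is precisely the step you yourself flag as the ``only obstacle'': the implication ``$K$ bounded below $\Rightarrow$ $K^*$ bounded below'' is false, and your attempted derivation of it is circular. From $K$ majorizing $I$, Lemma~\ref{maj}$(ii)$ gives $R(I^*)\subseteq R(K^*)$, i.e.\ $K^*$ is surjective; applying Lemma~\ref{maj}$(iii)$ with $T=K^*$ and $V=I_{X^*}$ then yields that $(K^*)^*=K^{**}$ majorizes $I_{X^{**}}$, which under reflexivity says only that $K$ is bounded below --- exactly the hypothesis you started from. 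It gives no lower control on $\|K^*f^*\|$. Indeed the implication fails already in Hilbert space: the unilateral shift $S$ on $\ell^2$ is an isometry (hence bounded below) while $S^*$ annihilates $e_1$. The same example shows the statement cannot be proved as literally written: $\{e_{j+1}\}_{j\ge1}$ satisfies $\sum_j|\langle f,e_{j+1}\rangle|^2=\|S^*f\|^2\le\|f\|^2$, so it is an $S$-frame with $S$ bounded below, yet it is not a frame for $\ell^2$.

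What is actually needed for the lower bound is that $K^*$ be bounded below, equivalently that $K$ be surjective (or, in the language of Lemma~\ref{maj}, that $R(I)\subseteq R(K)$, so that $K^*$ majorizes $I^*$ by part $(iii)$). That is evidently the hypothesis the paper intends, by analogy with the converse direction of Proposition~\ref{prop k}, where the assumption is explicitly that $Q^*$ is bounded below. With the hypothesis read as ``$K^*$ is bounded below,'' your final two inequalities complete the proof immediately, with frame bounds $Ac$ and $B$, and no reflexivity or convexity assumptions are needed for that step. As written, however, the argument does not close, and no amount of massaging Lemma~\ref{maj} will produce a lower bound for $K^*$ out of a lower bound for $K$.
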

\section{Sampling in a s.i.p. RKBS}
The main result of previous section in any s.i.p. RKBSs is given in this section. First, we state some notations needed for our next result.

We mention that $X$ is \textit{\textit{uniformly Fr\'{e}chet differentiable}} if for all $f, g\in X$
\begin{eqnarray}\label{FD}
 \lim_{\lambda\in \mathbb R, \lambda\to 0}\frac {\|f+\lambda g\|_X-\|f\|_X}{\lambda}
\end{eqnarray}
exists and the limit is approached uniformly for all $f, g$ in the unit ball of $X$. If $X$ is uniformly Fr\'{e}chet differentiable, then it has a unique compatible semi-inner product \cite{G} and see also \cite{ZX}. The differentiability (\ref{FD}) of the norm is useful to derive characterization equations for the minimizer of regularized learning schemes in Banach spaces. For simplicity, we call a Banach space uniform if it is both uniformly convex and uniformly Fr\'{e}chet differentiable. Notice that its dual $X^*$ is also uniform \cite{DFC}. In this section, $X$ is called a s.i.p. RKBS on $\Omega$ if it is a uniform Banach space of functions on $\Omega$ where point evaluations are always continuous linear functionals. Also its s.i.p. reproducing kernel is denoted by $k$. For the theory of RKBSs see for instance \cite{ZX} and references therein. Most important of all, by the arguments in the proof of Theorem 9 in \cite{ZX}, there exists a unique function $G:X\times X\to \mathbb{C}$ such that $G(t, .)\in X$ for all $t\in \Omega$ and $f(t)=[f, G(t, .)]$, for all $t\in \Omega$ and $f\in X$. By virtue of the above equation, $G$ is called the s.i.p. reproducing kernel of $X$. Moreover, there holds the relationship $k(., t)=(G(t, .))^*$, $t\in \Omega$ and $f^*(t)=[k(t, .), f]$ for all $f\in X$ and $t\in \Omega$. Set $K_{\mathcal Z}:=\{G(., t_j)^*\}=\{k(t_j, .)\}$. The sampling operator $\mathcal {I}_{\mathcal Z}:X^*\to X_d^*$ is defined by $\mathcal {I}_{\mathcal Z}(f^*)=\{f^*(t_j)\}$, i.e. $\mathcal {I}_{\mathcal Z}(f^*)=\{[k(t_j, .), f]\}, t_j\in \Omega$. For more details one can see page 12 to 14 \cite{ZZ}. Now the result of main Theorem in any s.i.p. RKBS is given below.
\begin{theorem}
With the notations mentioned above, let $X_d$ be a CB-space and $X$ be a s.i.p. RKBS on $\Omega$ with the s.i.p. reproducing kernel $k$ and $K_{\mathcal Z}$ be an $X^*_d$-Bessel sequence for $X^*$ and $\overline{{\mathcal I}_{\mathcal Z}(X^*)}$ is complemented in $X_d^*$. Then the following statements are equivalent:\\
$(i)$ $K_{\mathcal Z}$ is an $X_d^*$-atomic system for $X^*$ with respect to $K$;\\
$(ii)$ $K_{\mathcal Z}$ is an $X_d^*$-$K$-frame for $X^*$, i.e. there exist $A, B>0$ such that
\[ A\|K^*f^*\|_{X^*}\leq \|\mathcal {I}_{\mathcal Z}(f^*)\|_{X_d^*}\leq B\|f^*\|_{X^*}, \quad for\quad all\quad f^*\in X^*; \]
$(iii)$ $K_{\mathcal Z}$ is an $X_d^*$-Bessel sequence for $X^*$ and there exists an $X_d$-Bessel sequence $\{g_j^*\}$ for $X$ such that for any $f\in X$ we have
\[ K^*f^*=\sum_jf^*(t_j)g_j^*=\sum_j[k(t_j, .), f]g_j^*.\]
\end{theorem}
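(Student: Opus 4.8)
The plan is to obtain this statement as an immediate specialization of the main equivalence theorem of Section~3, applied to the concrete sequence $\{f_j\}:=K_{\mathcal Z}=\{k(t_j,\cdot)\}$. The first and only substantive step is to set up the dictionary between the abstract objects of Section~3 and the sampling objects here. Since $X$ is a s.i.p.\ RKBS, the reproducing identity $f^*(t)=[k(t,\cdot),f]$ (valid for all $f\in X$, $t\in\Omega$) shows that $k(t_j,\cdot)\in X$, so $K_{\mathcal Z}\subseteq X$, and that
\[ [f_j,f]=[k(t_j,\cdot),f]=f^*(t_j),\qquad f\in X,\ j. \]
Consequently the analysis operator $T\colon X^*\to X_d^*$, $T(f^*)=\{[f_j,f]\}$, attached to $K_{\mathcal Z}$ in Section~3 is literally the sampling operator $\mathcal I_{\mathcal Z}$, and therefore $\overline{R(T)}=\overline{\mathcal I_{\mathcal Z}(X^*)}$, which is complemented in $X_d^*$ by hypothesis.

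Next I would check that the standing assumptions of Section~3 hold in the present setting. A uniform Banach space is in particular uniformly convex, hence strictly convex, so $X$ admits a (unique) compatible semi-inner product and the duality map $f\mapsto f^*$ is bijective; assuming $X$ separable (consistent with sampling along the countable index set $\mathcal Z$ and with $X_d$ being a sequence space), we are exactly in the hypotheses of the Section~3 theorem. The remaining data also match: $X_d$ is a CB-space and $K_{\mathcal Z}$ is assumed to be an $X_d^*$-Bessel sequence for $X^*$.

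With these identifications the equivalence $(i)\Leftrightarrow(ii)\Leftrightarrow(iii)$ is just the Section~3 theorem rewritten. Statement $(i)$ is verbatim "$K_{\mathcal Z}$ is an $X_d^*$-atomic system for $X^*$ with respect to $K$" (the Bessel convergence of $\sum_j b_jf_j$ and the expansion $Kf=\sum_j a_jf_j$ involve only $f_j=k(t_j,\cdot)$ and need no translation); statement $(ii)$ is the $X_d^*$-$K$-frame inequality with $\|\{[f_j,f]\}\|_{X_d^*}$ replaced by $\|\mathcal I_{\mathcal Z}(f^*)\|_{X_d^*}$ via the displayed identity; and statement $(iii)$ is the reconstruction $K^*f^*=\sum_j[f_j,f]g_j^*$ from Section~3, with $[f_j,f]$ rewritten as $f^*(t_j)=[k(t_j,\cdot),f]$ to produce the sampling form $K^*f^*=\sum_j f^*(t_j)g_j^*$ (for any $f^*\in X^*$, equivalently any $f\in X$ through the bijective duality map).

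Since the whole argument is a translation, there is no deep obstacle; the one point requiring genuine care is precisely the first paragraph — verifying that $k(t_j,\cdot)\in X$, that the analysis operator of $K_{\mathcal Z}$ coincides with $\mathcal I_{\mathcal Z}$, and that the "complemented range" condition assumed here is exactly the hypothesis of the Section~3 theorem — together with the side check that the uniformity of $X$ (plus separability) supplies all the standing assumptions of that section. Once this bookkeeping is in place, invoking the earlier theorem completes the proof.
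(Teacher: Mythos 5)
Your proposal is correct and is exactly the route the paper intends: the theorem is stated as the specialization of the Section~3 equivalence to $\{f_j\}=\{k(t_j,\cdot)\}$, using the reproducing identity $f^*(t_j)=[k(t_j,\cdot),f]$ to identify the analysis operator $T$ with the sampling operator $\mathcal I_{\mathcal Z}$ (the paper in fact omits the proof entirely, treating it as immediate). Your careful bookkeeping of the standing hypotheses (uniformity $\Rightarrow$ strict convexity, separability, complementedness of $\overline{R(T)}=\overline{\mathcal I_{\mathcal Z}(X^*)}$) is precisely what is needed and is, if anything, more explicit than the source.
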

A corollary of the previous Theorem is given below.
\begin{corollary}
With the notations mentioned above, let $X_d$ be a CB-space and $X$ be a s.i.p. RKBS on $\Omega$ with the s.i.p. reproducing kernel $k$ and $K_{\mathcal Z}$ be an $X^*_d$-Bessel sequence for $X^*$ and $\overline{{\mathcal I}_{\mathcal Z}(X^*)}$ is complemented in $X_d^*$. Then the following statements are equivalent:\\
$(i)$ $K_{\mathcal Z}$ is an $X_d^*$-frame for $X^*$;\\
$(ii)$ $K_{\mathcal Z}$ is an $X_d^*$-Bessel sequence for $X^*$ and there exists an $X_d$-Bessel sequence $\{g_j^*\}$ for $X$ such that for any $f\in X$, we have
\[ f^*=\sum_jf^*(t_j)g_j^*=\sum_j[k(t_j, .), f]g_j^*.\]
\end{corollary}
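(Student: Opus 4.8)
The plan is to obtain this corollary as the special case $K = I_X$ of the preceding Theorem, so the work is bookkeeping rather than new mathematics. First I would record that the Banach-space adjoint of the identity operator $I_X$ is the identity $I_{X^*}$ on $X^*$; hence the inequality defining an $X_d^*$-$I_X$-frame,
\[ A\|I_{X^*}f^*\|_{X^*}\leq \|\mathcal{I}_{\mathcal Z}(f^*)\|_{X_d^*}\leq B\|f^*\|_{X^*}, \]
collapses to $A\|f^*\|_{X^*}\leq \|\mathcal{I}_{\mathcal Z}(f^*)\|_{X_d^*}\leq B\|f^*\|_{X^*}$, which is exactly the statement that $K_{\mathcal Z}$ is an $X_d^*$-frame for $X^*$, in accordance with the convention fixed just after the definition of $X_d^*$-$K$-frames. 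So statement (ii) of the Theorem, taken with $K = I_X$, is verbatim statement (i) of the corollary.

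Next I would verify that every hypothesis of the Theorem is already among the hypotheses of the corollary once $K = I_X$: $X_d$ is a CB-space, $X$ is a s.i.p. RKBS on $\Omega$ with reproducing kernel $k$, $K_{\mathcal Z}$ is an $X_d^*$-Bessel sequence for $X^*$, and $\overline{\mathcal{I}_{\mathcal Z}(X^*)}$ is complemented in $X_d^*$. I would also make explicit that the operator called $T$ in the Theorem coincides with the sampling operator $\mathcal{I}_{\mathcal Z}$: for $f_j = k(t_j,.)\in K_{\mathcal Z}$ the reproducing identity $f^*(t)=[k(t,.),f]$ gives $T(f^*)=\{[k(t_j,.),f]\}=\{f^*(t_j)\}=\mathcal{I}_{\mathcal Z}(f^*)$, so the complementation assumption in the corollary is literally the complementation of $\overline{R(T)}$ demanded by the Theorem. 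Consequently the Theorem applies with no adjustment.

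Finally, with $K = I_X$ the identity $K^* = I_{X^*}$ turns the reconstruction formula in statement (iii) of the Theorem into $f^* = \sum_j f^*(t_j) g_j^* = \sum_j [k(t_j,.),f] g_j^*$ for all $f\in X$, which is precisely statement (ii) of the corollary; the Bessel conditions on $K_{\mathcal Z}$ and on $\{g_j^*\}$ carry over unchanged. Hence statements (i) and (ii) of the corollary are exactly statements (ii) and (iii) of the Theorem specialized to $K = I_X$, and their equivalence is immediate; the atomic-system characterization (statement (i) of the Theorem) is simply dropped. I do not anticipate a genuine obstacle here — the only point requiring care is the bookkeeping above, namely confirming $I_X^{*} = I_{X^*}$ and $T = \mathcal{I}_{\mathcal Z}$, after which invoking the Theorem suffices.
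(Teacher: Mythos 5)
Your proposal is correct and matches the paper's intent exactly: the paper states the corollary without proof as an immediate consequence of the preceding theorem, and the specialization $K=I_X$ (with $I_X^*=I_{X^*}$ and $T=\mathcal{I}_{\mathcal Z}$) is precisely the argument being relied upon. No further comment is needed.
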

\section{Perturbations of $X_d^*$-$K$-frames by using $X_d^*$-$K$-frame operators}
Several authors have generalized the Paley--Wiener perturbation theorem to the perturbation of frames in Hilbert spaces. The most general result of these was obtained by Casazza and Christensen \cite[Theorem 2.1]{CC}. We show that the Casazza--Christensens perturbation theorem of Hilbert space frames somehow holds for $X_d^*$-$K$-frames. In order to state and prove this theorem, we have to define $X_d^*$-$K$-frame operator for $X^*$. The first thing which we will employ is a known result about pseudo-inverse of any bounded linear operator  \cite{SLC, MZN}.

Let $X$ and $Y$ be Banach spaces and $Q\in B(X, Y)$ be such that the range $R(Q)$ of $Q$ is closed in $Y$. Assume that $X$ is the topological sum of the null space $N(Q)$ of $Q$ and $N(Q)^c$, and $Y$ is the topological sum of $R(Q)$ and $R(Q)^c$, where $N(Q)^c$ and $R(Q)^c$ are closed subspaces of $X$ and $Y$, respectively. Note that $Q$ is one-to-one from $N(Q)^c$ onto $R(Q)$. Let $P_X$ be the projection of $X$ onto $N(Q)$ along $N(Q)^c$, and let $P_Y$ be the projection of $Y$ onto $R(Q)$ along $R(Q)^c$. The bounded linear operator $Q^{\dag}:Y\to X$ defined by $Q^{\dag}Qf=f$ for $f\in N(Q)^c$ and $Q^{\dag}g=0$ for $g\in R(Q)^c$ is called the pseudo-inverse of $Q$ (with respect to $P_X, P_Y$). In particular, for any $g \in R(Q)$, $QQ^{\dag}g=g$. If there exists a pseudo-inverse $Q^{\dag}$ of $Q$ such that $QQ^{\dag}f=f$, for any $f$ in $R(Q)$, namely $QQ^{\dag}{\mid}_{R(Q)}=I|_{R(Q)}$, then ${(Q^{\dag}{\mid}_{R(Q^*)})}^*Q^*=I{\mid}_{R(Q^*)}$. So we have $\|f^*\|=\|(Q^{\dag}\mid_{R(Q^*)})^*Q^*f^*\|\leq \|Q^{\dag}\|\|Q^*f^*\|$ for every $f^*\in R(Q^*)$. In the rest of this section when we use $K^{\dag}$, in any $X_d^*$-$K$-frame, we mean, under these conditions.

Now, we are going to define $X_d^*$-$K$-frame operator (see also \cite{sto}). Let $X$ be a separable Banach space and $X_d$ be a BK-space. In order to compose the operator $T:X^* \to X_d^*$ defined by $T(f^*)=\{[f_j, f]\}$ and the operator $U:X_d \to X$ defined by $U(\{c_j\})=\sum_jc_jf_j$, we use the duality mapping $\Phi_{X_d^*}:X_d^* \to X_d^{**}$, $\Phi_{X_d^*}(c^*)=\{ c^{**}\in X_d^{**}  :  c^{**}(c^*)=\|c^*\|^2=\|c^{**}\|^2\}$, in the case when it is single-valued.

For a given $\{f_j\}$, as an $X_d^*$-$K$-frame for $X^*$, the operators $T$ and $U$, defined above, call the \textit{analysis} and \textit{synthesis} operator for $\{f_j\}$. In the sequel we use these operators as they are defined above.

The duality mapping $\Phi_{X_d^*}$ needs being single-valued. In so doing, $X_d^*$ or just $X_d$ has to be uniformly convex BK-space because the bi-dual element of $c$, in any uniformly convex Banach space $X_d$, equals to $c$, i.e. $c^{**}=c$. Besides, if $X_d^{**}$ is strictly convex then $\Phi_{X_d^*}$ is single-valued \cite{D}. Note that a uniformly convex Banach space is automatically strictly convex and reflexive. In addition, a dual space of any Banach space is also uniformly convex Banach space. Therefore, $\Phi_{X_d^*}$ is a single-valued map to $X_d$. In the sequel, if there is no risk of confusion, we will omit the index and write $\Phi$ instead of $\Phi_{X_d^*}$.
\begin{definition}\label{op fr}
Let $X$ be a strictly convex separable Banach spaces, $X_d$ be a uniformly convex BK-space, and $\{f_j\}$ be an $X_d^*$-$K$-frame for $X^*$. We define the \textit{$X_d^*$-$K$-frame operator} for $\{f_j\}$ via
\[ S:=U\Phi_{X_d^*}T. \]
\end{definition}
Note that $S$ is the bounded operator from $X^*$ to $X$.
\begin{lemma}\label{analys}
Let $X_d$ be a CB-space and $\{f_j\}$ be an $X_d^*$-Bessel sequence for $X^*$, then $U^*=T$.
\end{lemma}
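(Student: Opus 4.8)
The plan is to recycle the identification of $X_d^*$ with the sequence space $X_d^\circledast$ from Lemma \ref{bess}, together with the defining relation $f^*(g)=[g,f]$ for the dual element of $f$, and to read off that the adjoint of the synthesis operator is exactly the analysis operator. No analytic machinery beyond a convergence check is needed.

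First I would verify that, under the stated hypotheses, the synthesis operator $U$ is genuinely a well-defined bounded operator, since the $X_d^*$-Bessel condition as stated says nothing a priori about convergence of $\sum_j c_j f_j$. This is precisely the estimate already carried out in $(\ref{con})$: for $c=\{c_j\}\in X_d$ and integers $m>n$ one has
\[ \Bigl\|\sum_{j\in J_m\backslash J_n}c_jf_j\Bigr\|_X \le B\,\Bigl\|\sum_{j\in J_m\backslash J_n}c_je_j\Bigr\|_{X_d}, \]
where $B$ is the Bessel bound, and the right-hand side tends to $0$ because $\{e_j\}$ is a Schauder basis of the CB-space $X_d$. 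Hence $\sum_j c_j f_j$ converges in $X$, and $U(\{c_j\}):=\sum_j c_j f_j$ is bounded with $Ue_j=f_j$ and $\|U\|\le B$. (Equivalently, one may simply invoke the argument already made in the proof of Lemma \ref{bess}.)

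With $U\in B(X_d,X)$ in hand, its adjoint $U^*$ lies in $B(X^*,X_d^*)$, so $U^*$ and $T$ share the domain $X^*$ and codomain $X_d^*$; it remains to compare them pointwise. Fix $f^*\in X^*$. By Lemma \ref{star} the functional $U^*(f^*)\in X_d^*$ is completely determined by, and isometrically identified with, the sequence $\{U^*(f^*)(e_j)\}_j\in X_d^\circledast$; and
\[ U^*(f^*)(e_j)=f^*(Ue_j)=f^*(f_j)=[f_j,f], \]
the last equality being the definition of the dual element $f^*$. Thus, under the identification of Lemma \ref{star}, $U^*(f^*)$ is the element $\{[f_j,f]\}$ of $X_d^*$, which is exactly $T(f^*)$ by definition of $T$. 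Since $f^*\in X^*$ was arbitrary, $U^*=T$.

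The only place requiring care is the bookkeeping in the last paragraph: one must keep in mind that $T(f^*)$ denotes the sequence $\{[f_j,f]\}$ regarded as an element of $X_d^*\cong X_d^\circledast$, so that the equality of sequences obtained above is genuinely an equality of functionals on $X_d$. I do not expect any real obstacle here; the substantive content is the convergence argument of the first step, and that has in effect already been done earlier in the paper.
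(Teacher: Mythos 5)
Your proof is correct and follows essentially the same route as the paper's: compute $U^*(f^*)(e_j)=f^*(Ue_j)=f^*(f_j)=[f_j,f]$ and conclude $U^*f^*=Tf^*$ because a continuous functional on the CB-space $X_d$ is determined by its values on the canonical basis (Lemma \ref{star}, or equivalently the Schauder-basis expansion the paper writes out). The only difference is that you first justify that the synthesis operator $U$ is well defined and bounded via the estimate (\ref{con}), a point the paper leaves implicit; this is a reasonable addition but not a different argument.
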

\begin{proof}
Let $f^*\in X^*$, then
\[ U^*(f^*)(e_j)=f^*(Ue_j)=f^*(f_j)=[f_j, f]=T(f^*)(e_j).\]
For $c=\{c_j\}\in X_d$ we have
\begin{eqnarray*}
U^*(f^*)(c)&=&U^*(f^*)(\sum_jc_je_j)=\sum_jc_jU^*(f^*)(e_j)\\
&=&\sum_jc_jT(f^*)(e_j)=T(f^*)(\sum_jc_j e_j)=T(f^*)(c),
\end{eqnarray*}
therefore, $U^*=T$.
\end{proof}
Under the assumption in Lemma \ref{analys} $S$, the $X_d^*$-$K$-frame for $\{f_j\}$, can be written $U\Phi U^*$ instead of $U\Phi T$. In the sequel we use $S$ as the $X_d^*$-$K$-frame operator for $\{f_j\}$.

The following proposition of $X_d^*$-$K$-frame operators need for our next study.
\begin{proposition}\label{kdag}
Let $X$ be a strictly convex separable Banach space, $X_d$ be a uniformly convex BK-space, and $\{f_j\}$ be an $X_d^*$-$K$-frame for $X^*$ with $X_d^*$-$K$-frame bounds $A$ and $B$, respectively. Then the following statement holds:
\begin{enumerate}
  \item [$(i)$] $A^2\|K^*f^*\|^2\leq [Sf^*, f]\leq B^2\|f^*\|^2$, for all $f^*\in X^*$.\\
Moreover, suppose that $K$ has pseudo-inverse $K^{\dag}$. For any $f^*\in R(K^*)$, the following statements hold:
  \item [$(ii)$] $A^2\|K^{\dag}\|^{-2}\|f^*\|\leq\|Sf^*\|\leq B^2\|f^*\|;$
  \item [$(iii)$]$\|Tf^*\|\leq A^{-1}\|K^{\dag}\|\|Sf^*\|.$
\end{enumerate}
\end{proposition}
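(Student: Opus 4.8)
The plan is to reduce all three estimates to the single identity
\[
[Sf^*, f] \;=\; \|\{[f_j, f]\}\|_{X_d^*}^2 \;=\; \|Tf^*\|_{X_d^*}^2, \qquad f^*\in X^*,
\]
which plays the role that $\langle Sf,f\rangle=\|\{[f_j,f]\}\|^2$ plays for Hilbert space frames. To establish it I would first use that $Sf^*\in X$ and that $f^*$ is the dual element of $f$, so $[Sf^*,f]=f^*(Sf^*)$; then, writing $S=U\Phi T$ and invoking $U^*=T$ from Lemma \ref{analys}, I get $f^*(Sf^*)=f^*\bigl(U(\Phi Tf^*)\bigr)=(U^*f^*)(\Phi Tf^*)=(Tf^*)(\Phi Tf^*)$. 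Finally, by the defining property of the (single-valued) duality mapping $\Phi=\Phi_{X_d^*}$, namely $\Phi(c^*)(c^*)=\|c^*\|^2=\|\Phi(c^*)\|^2$, together with the reflexivity identification $X_d^{**}=X_d$ that makes the composition $U\Phi T$ legitimate, the pairing $(Tf^*)(\Phi Tf^*)$ is exactly $\|Tf^*\|_{X_d^*}^2$. This proves the identity, and also records $\|\Phi Tf^*\|_{X_d}=\|Tf^*\|_{X_d^*}$ for later use.

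Part $(i)$ is then immediate: squaring the defining inequality $A\|K^*f^*\|\le\|\{[f_j,f]\}\|_{X_d^*}\le B\|f^*\|$ of an $X_d^*$-$K$-frame and substituting the identity yields $A^2\|K^*f^*\|^2\le[Sf^*,f]\le B^2\|f^*\|^2$ for all $f^*\in X^*$.

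For the upper bound in $(ii)$ I would estimate $\|Sf^*\|=\|U(\Phi Tf^*)\|\le\|U\|\,\|\Phi Tf^*\|_{X_d}=\|U\|\,\|Tf^*\|_{X_d^*}$; since $\|U\|=\|U^*\|=\|T\|\le B$ (the upper frame bound) and $\|Tf^*\|\le B\|f^*\|$, this gives $\|Sf^*\|\le B^2\|f^*\|$, in fact for every $f^*\in X^*$. For the lower bound, combining the identity with $[Sf^*,f]=f^*(Sf^*)\le\|f^*\|\,\|Sf^*\|$ gives $\|Tf^*\|^2\le\|f^*\|\,\|Sf^*\|$, hence $\|Sf^*\|\ge\|Tf^*\|^2/\|f^*\|$. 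Restricting to $f^*\in R(K^*)$, the pseudo-inverse estimate $\|f^*\|\le\|K^{\dag}\|\,\|K^*f^*\|$ together with the lower $X_d^*$-$K$-frame inequality $\|Tf^*\|\ge A\|K^*f^*\|$ yields $\|Tf^*\|\ge A\|K^{\dag}\|^{-1}\|f^*\|$, and substituting produces $\|Sf^*\|\ge A^2\|K^{\dag}\|^{-2}\|f^*\|$.

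Finally, $(iii)$ follows from the same two ingredients. From $\|Tf^*\|^2\le\|f^*\|\,\|Sf^*\|$ and, for $f^*\in R(K^*)$, $\|f^*\|\le\|K^{\dag}\|\,\|K^*f^*\|\le A^{-1}\|K^{\dag}\|\,\|Tf^*\|$, one gets $\|Tf^*\|^2\le A^{-1}\|K^{\dag}\|\,\|Tf^*\|\,\|Sf^*\|$; dividing by $\|Tf^*\|$ when it is nonzero (the case $Tf^*=0$ being trivial) gives $\|Tf^*\|\le A^{-1}\|K^{\dag}\|\,\|Sf^*\|$. The only step demanding genuine care — and thus the main obstacle — is the bookkeeping behind the key identity: correctly invoking $U^*=T$, tracking the reflexivity identification $X_d^{**}\cong X_d$ under which $\Phi$ takes values in $X_d$, and verifying that the pairing $(Tf^*)(\Phi Tf^*)$ equals $\|Tf^*\|^2$ rather than being merely bounded by it. Everything downstream is routine norm estimation.
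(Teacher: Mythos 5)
Your proposal is correct and follows essentially the same route as the paper: both rest on the identity $[Sf^*,f]=(Tf^*)(\Phi Tf^*)=\|Tf^*\|_{X_d^*}^2$ obtained from $U^*=T$ and the defining property of the duality mapping, and then derive $(i)$--$(iii)$ by combining this with the frame inequalities and the pseudo-inverse estimate $\|f^*\|\leq\|K^{\dag}\|\,\|K^*f^*\|$ on $R(K^*)$. Your minor variations (bounding $\|Sf^*\|\leq\|U\|\,\|Tf^*\|$ directly instead of the paper's supremum computation, and dividing by $\|Tf^*\|$ in $(iii)$ rather than substituting the bound from $(ii)$) are cosmetic.
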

\begin{proof}
\begin{enumerate}
  \item [$(i)$] Since $\{f_j\}$ is an $X_d^*$-$K$-frame for $X^*$, it is enough to show that $[Sf^*, f]=\|\{[f_j, f]\}\|_{X_d^*}^2$. For $f^*\in X^*$ we have
 \begin{eqnarray*}
[Sf^*, f]&=&[U\Phi Tf^*, f]=f^*(U\Phi Tf^*)=U^*f^*(\Phi Tf^*)\\
&=&(\{[f_j, f]\})((\{[f_j, f]\})^*)=[(\{[f_j, f]\})^*, (\{[f_j, f]\})^*]_{X_d}\\
&=&[\{[f_j, f]\}, \{[f_j, f]\}]_{X_d^*}=\|\{[f_j, f]\}\|_{X_d^*}^2.
  \end{eqnarray*}
  \item [$(ii)$] From part $(i)$ and $\|f^*\|=\|(K^{\dag}\mid_{R(K^*)})^*K^*f^*\|\leq \|K^{\dag}\|\|K^*f^*\|$ for every $f^*\in R(K^*)$, we obtain
  \begin{equation*}
  [Sf^*, f]\geq A^2\|K^*f^*\|^2\geq A^2\|K^{\dag}\|^{-2}\|f^*\|^2,
  \end{equation*}
hence
\begin{equation}\label{RS}
\|Sf^*\|\geq \frac{[Sf^*, f]}{\|f\|}\geq\frac{A^2\|K^{\dag}\|^{-2}\|f^*\|^2}{\|f\|}=A^2\|K^{\dag}\|^{-2}\|f^*\|.
\end{equation}
On the other hand
\begin{eqnarray}\label{LS}
\|Sf^*\|&=&\sup_{g^*\in X^*, \|g^*\|=1}g^*(Sf^*)=\sup_{g\in X, \|g\|=1}[Sf^*, g]\nonumber\\
&=&\sup_{g\in X, \|g\|=1}[\sum_jd_jf_j, g]=\sup_{g\in X, \|g\|=1}\sum_jd_j[f_j, g]\nonumber\\
&\leq&\sup_{g\in X, \|g\|=1} \|\{d_j\}\|\|\{[f_j, g]\}\|\leq \sup_{g\in X, \|g\|=1} B\|g\|\|\{d_j\}\|\nonumber\\
&\leq& B\|\{[f_j, f]\}\| \leq B^2\|f^*\|,
\end{eqnarray}
where $\{d_j\}:=\{[f_j, f]\}^*$, for all $j\in J$. Therefore, we get part $(ii)$ by applying (\ref{RS}) and (\ref{LS}).
\item [$(iii)$] By applying part $(i)$ and $(ii)$ we have
\begin{eqnarray*}\label{TS}
\|Tf^*\|^2&=&\|\{[f_j, f]\}\|^2=[Sf^*, f]\\
&\leq&\|Sf^*\| \|f\| \leq A^{-2}\|K^{\dag}\|^2 \|Sf^*\|^2
\end{eqnarray*}
This implies part $(iii)$.
\end{enumerate}
\end{proof}
We are now ready to state and prove our theorem about perturbation of $X_d^*$-$K$-frame. Actually, it is the generalization of \cite[Theorem 2.1]{CC}, \cite[Theorem 3.13]{XZG}, and \cite[Proposition 4.3]{ZY}.
\begin{theorem}
Assume that $X$ is a strictly convex separable Banach space, $X_d$ is a uniformly convex Banach space, and  $\{f_j\}$ is an $X_d^*$-$K$-frame for $X^*$ with $X_d^*$-frame bounds $A$ and $B$, respectively. Suppose that $\{g_j\}$ is a sequence in $X$. If there exist $\alpha, \beta, \gamma\geq0$ such that for any finite sequence $\{c_j\}\in X_d$,
\begin{equation}\label{asu}
\|\sum_jc_j(g_j-f_j)\|_X\leq \alpha \|\sum_jc_jf_j\|_X+\beta\|\sum_jc_jg_j\|_X+\gamma\|\{c_j\}\|_{X_d},
\end{equation}
is fulfilled with $\max \{\beta, \alpha+\gamma{A^{-1}}\|K^{\dag}\|\|\Phi\|\}<1$. Then $\{g_j\}$ is also an $X_d^*$-$PK$-frame for $X^*$, where $P$ is the orthogonal projection operator from $X$ to $Q(R(K^*))$, $Q:=V\Phi U^*$, $U$, $V$ are \textit{synthesis} operators for $\{f_j\}$ and $\{g_j\}$, respectively.
\end{theorem}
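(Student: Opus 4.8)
The plan is to mimic the Casazza--Christensen argument, replacing the Hilbert-space adjoint apparatus with the synthesis/analysis operators $U,V$ and the duality map $\Phi$ introduced above. First I would set up the bounded operators: $U\colon X_d\to X$, $Ue_j=f_j$, and $V\colon X_d\to X$, $Ve_j=g_j$ (well-definedness and boundedness of $V$ will need the hypothesis~\eqref{asu} together with the Bessel property of $\{f_j\}$: for a finite sequence, $\|\sum c_j g_j\|\le \|\sum c_j f_j\| + \|\sum c_j(g_j-f_j)\|$, and plugging~\eqref{asu} in and solving for $\|\sum c_j g_j\|$ using $\beta<1$ gives $\|\sum c_j g_j\|\le \tfrac{(1+\alpha)\|U\|+\gamma}{1-\beta}\|\{c_j\}\|_{X_d}$; density of finite sequences in the CB-space $X_d$ then extends $V$ to all of $X_d$, and Lemma~\ref{bess} makes $\{g_j\}$ an $X_d^*$-Bessel sequence with this bound, which is the upper $\,X_d^*$-$PK$-frame bound up to the factor $\|P\|$). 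By Lemma~\ref{analys}, $U^*=T$ and $V^*$ is the analysis operator for $\{g_j\}$, namely $V^*f^*=\{[g_j,f]\}$.

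Next I would reinterpret~\eqref{asu} at the operator level. For $c=\{c_j\}\in X_d$ one has $\sum_j c_j(g_j-f_j)=(V-U)c$, $\sum_j c_j f_j = Uc$, $\sum_j c_j g_j = Vc$, so~\eqref{asu} reads
\[
\|(V-U)c\|_X\le \alpha\|Uc\|_X+\beta\|Vc\|_X+\gamma\|c\|_{X_d},\qquad c\in X_d .
\]
Now apply this with $c=\Phi U^*f^* = \Phi Tf^*$ for $f^*\in X^*$. Recalling $S=U\Phi U^*$ is the $X_d^*$-$K$-frame operator and $Q:=V\Phi U^*$, the inequality becomes
\[
\|Qf^*-Sf^*\|_X\le \alpha\|Sf^*\|_X+\beta\|Qf^*\|_X+\gamma\|\Phi\|\,\|Tf^*\|_{X_d^*},\qquad f^*\in X^* .
\]
Restricting to $f^*\in R(K^*)$ and using Proposition~\ref{kdag}$(iii)$, $\|Tf^*\|\le A^{-1}\|K^{\dag}\|\,\|Sf^*\|$, I would absorb the last term:
\[
\|Qf^*-Sf^*\|_X\le \bigl(\alpha+\gamma A^{-1}\|K^{\dag}\|\|\Phi\|\bigr)\|Sf^*\|_X+\beta\|Qf^*\|_X .
\]
Writing $\lambda_1:=\alpha+\gamma A^{-1}\|K^{\dag}\|\|\Phi\|$ and $\lambda_2:=\beta$, both $<1$ by hypothesis, the triangle inequality gives the two-sided estimate
\[
\frac{1-\lambda_1}{1+\lambda_2}\,\|Sf^*\|_X\le \|Qf^*\|_X\le \frac{1+\lambda_1}{1-\lambda_2}\,\|Sf^*\|_X,\qquad f^*\in R(K^*) .
\]
The right-hand inequality, combined with the global Bessel bound on $Q$ obtained in the first paragraph, handles the upper frame inequality; the left-hand one is the heart of the lower bound.

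To finish the lower bound I would chain the left inequality with Proposition~\ref{kdag}$(ii)$: for $f^*\in R(K^*)$,
\[
\|Qf^*\|_X\ \ge\ \frac{1-\lambda_1}{1+\lambda_2}\,\|Sf^*\|_X\ \ge\ \frac{1-\lambda_1}{1+\lambda_2}\,A^2\|K^{\dag}\|^{-2}\,\|f^*\|_X .
\]
Then, given an arbitrary $f^*\in X^*$, apply this with $K^*f^*$ in place of $f^*$ (note $K^*f^*\in R(K^*)$) and use $\|K^{*}f^{*}\|\le\|K^{\dag}\|\,\|K^{*}(K^{*}f^{*})^{\star}\|$-type control, or more directly the definition of $P$ as the projection onto $Q(R(K^*))$: since $P$ restricted to $Q(R(K^*))$ is the identity, $\|PQ f^*\| = \|Qf^*\|$ for $f^*\in R(K^*)$, and for general $f^*$ one relates $\|(PK)^* f^*\|=\|K^*P^* f^*\|$ to $\|Qf^*\|$ via the displayed estimate so as to produce a constant $A'>0$ with $A'\|(PK)^*f^*\|_{X^*}\le \|\{[g_j,f]\}\|_{X_d^*}$. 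Together with the upper Bessel bound this says precisely that $\{g_j\}$ is an $X_d^*$-$PK$-frame for $X^*$.

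I expect the main obstacle to be the bookkeeping around $P$ and the passage from estimates valid only on $R(K^*)$ to the genuine $PK$-frame inequality, since — unlike in Hilbert space — $P$ is merely a bounded projection, there is no orthogonality to exploit, and the identity $K^* = U\Phi T$ only holds up to the single-valuedness hypotheses on $\Phi$; one must be careful that $Q(R(K^*))$ is closed (so that $P$ is well defined) and that the constants stay positive, which is exactly where $\max\{\beta,\alpha+\gamma A^{-1}\|K^{\dag}\|\|\Phi\|\}<1$ is used. The non-additivity of the semi-inner product in its second slot is a secondary nuisance, but the argument above only ever uses linearity in the first slot and the Cauchy--Schwarz inequality (property 3), so it does not actually bite.
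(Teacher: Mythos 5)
Your argument coincides with the paper's own proof up to and including the two-sided estimate
\[
\frac{1-\lambda_1}{1+\lambda_2}\,\|Sf^*\|\le\|Qf^*\|\le\frac{1+\lambda_1}{1-\lambda_2}\,\|Sf^*\|,\qquad f^*\in R(K^*),
\]
and its combination with Proposition \ref{kdag}$(ii)$; the Bessel bound for $\{g_j\}$ is also obtained exactly as in the paper. The gap is in the final step. The quantity you must bound from below is $\|\{[g_j,f]\}\|_{X_d^*}=\|V^*f^*\|$, the \emph{analysis} operator of $\{g_j\}$ applied to $f^*$, whereas your estimate controls $\|Qf^*\|=\|V\Phi U^*f^*\|$, a completely different object (note $Q$ maps $X^*$ into $X$, not into $X_d^*$). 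Substituting $K^*f^*$ into the estimate, as you suggest, yields a lower bound on $\|QK^*f^*\|$ and does not connect to $\{[g_j,f]\}$ at all; asserting that ``one relates $\|K^*P^*f^*\|$ to $\|Qf^*\|$ via the displayed estimate'' is precisely the step that is not routine and is left unproved.

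The missing idea, which is how the paper closes the argument, is the following. The lower bound on $\|Qf^*\|$ over $R(K^*)$ shows that $R(Q)$ is closed and that $Q\colon R(K^*)\to R(Q)$ is invertible with an explicit bound on $\|Q^{-1}\|$. One then expands $PKg$ in the system $\{g_j\}$ by writing, for any $g$ in the unit ball of $X$,
\[
PKg=QQ^{-1}PKg=V\bigl(\Phi U^*Q^{-1}PKg\bigr)=\sum_j\bigl(\Phi U^*Q^{-1}PKg\bigr)_j\,g_j,
\]
and computes
\[
\|K^*P^*f^*\|=\sup_{\|g\|=1}\Bigl|\sum_j\bigl(\Phi U^*Q^{-1}PKg\bigr)_j[g_j,f]\Bigr|\le\sup_{\|g\|=1}\bigl\|\Phi U^*Q^{-1}PKg\bigr\|_{X_d}\,\bigl\|\{[g_j,f]\}\bigr\|_{X_d^*}.
\]
Bounding the first factor by $\|\Phi\|\,\|U^*\|\,\|Q^{-1}\|\,\|K\|$ produces the explicit lower $X_d^*$-$PK$-frame bound. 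Without this expansion of $PKg$ through $Q^{-1}$, your chain of inequalities never reaches the sequence $\{[g_j,f]\}$, so the lower frame inequality remains unestablished. Everything else in your proposal, including the identification of where $\max\{\beta,\alpha+\gamma A^{-1}\|K^{\dag}\|\|\Phi\|\}<1$ is used and the caution about closedness of $Q(R(K^*))$, is consistent with the paper.
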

\begin{proof}
Since $\{f_j\}$ is an $X_d^*$-$K$-frame for $X^*$, we can define operators $U:X_d \to X$ by
\[ U(\{c_j\})=\sum_jc_jf_j,\]
Furthermore, $\|U\|\leq B$. Suppose that condition (\ref{asu}) holds for any finite sequence $\{c_j\}$. Then for each $c=\{c_j\}\in X_d$ we have that
\[ \|\sum_jc_jg_j\|\leq \frac{1+\alpha}{1-\beta}\|\sum_jc_jf_j\|+\frac{\gamma}{1-\beta}\|\{c_j\}\|.\]
So
\[ \|\sum_{j\in J_m\ J_n}c_jg_j\|\leq \frac{(1+\alpha) B+\gamma}{1-\beta}\|\sum_{j\in J_m\ J_n}c_je_j\|.\]
When $m>n$ tend to $\infty$, $\sum_{j\in J_m\ J_n}c_je_j$ tends to zero. Thus, the series $\sum_jc_jg_j$ converges for any $\{c_j\}\in X_d$ and
\[ \|\sum_jc_jg_j\|\leq \frac{(1+\alpha) B+\gamma}{1-\beta}\|\sum_jc_je_j\|.\]
By Lemma \ref{bess}, $\{g_j\}$ is an $X_d^*$-Bessel sequence for $X^*$ with bound $\frac{(1+\alpha) B+\gamma}{1-\beta}$.

Now we show that $\{g_j\}$ has a lower $X_d^*$-$K$-frame bound. The condition (\ref{asu}) turns to be
\[ \|Uc-Vc\|\leq \alpha\|Uc\|+\beta\|Vc\|+\gamma\|c\|, \quad c=\{c_j\}\in X_d.\]
For $c=\Phi_{X_d^*} U^*f^*\in X_d$ we have
\begin{eqnarray}\label{A}
\|U\Phi U^*f^*-V\Phi U^*f^*\|&=&\|Sf^*-V\Phi U^*f^*\|\\
&\leq& \alpha\|Sf^*\|+\beta\|V\Phi U^*f^*\|+\gamma\|\Phi U^*f^*\|,
\end{eqnarray}
Here we use Lemma \ref{analys}, i.e. $S=U\Phi_{X_d^*}U^*$. By part $(iii)$ of Proposition \ref{kdag} and (\ref{A}) we have
\begin{equation}\label{leq1}
\|Sf^*-V\Phi U^*f^*\|\leq (\alpha+\gamma{A^{-1}}\|K^{\dag}\|\|\Phi\|)\|Sf^*\|+\beta\|V\Phi U^*f^*\|.
\end{equation}
By triangular inequality, it follows from (\ref{leq1}) that
\begin{eqnarray}\label{gleq1}
\frac{1-(\alpha+\gamma{A^{-1}}\|K^{\dag}\|\|\Phi\|)}{1+\beta}\|Sf^*\| &\leq&\|V\Phi U^*f^*\|\nonumber\\
&\leq& \frac{1+\alpha+\gamma{A^{-1}}\|K^{\dag}\|\|\Phi\|}{1-\beta}\|Sf^*\|.
\end{eqnarray}
By Combination part $(ii)$ of Proposition \ref{kdag} and (\ref{gleq1}), for any $f^*\in R(K^*)$, we have
\begin{eqnarray}\label{gleq2}
&&\frac{\big(1-(\alpha+\gamma{A^{-1}}\|K^{\dag}\|\|\Phi\|)\big)A^2\|K^{\dag}\|^{-2}}{1+\beta}\|f^*\|\leq \|V\Phi U^*f^*\|\nonumber\\
&\leq& \frac{\big(1+\alpha+\gamma{A^{-1}}\|K^{\dag}\|\|\Phi\|\big)B^2}{1-\beta}\|f^*\|.
\end{eqnarray}
Next we show that $R(Q:=V\Phi U^*)$ is closed. Suppose that $\{f_n\}_{n=1}^{\infty}\subseteq R(Q)$ and $f_n \to f$ as $n \to \infty$. Then we can find $g_n\in R(K^*)$ such that $Q(g_n)=f_n$. It follows from inequality (\ref{gleq2}) that $\{g_n\}_{n=1}^{\infty}$ is a Cauchy sequence. Suppose that $g_n \to g$ as $n \to \infty$. Therefore $f_n=Qg_n \to Qg=f$ as $n\to \infty$. Which implies that $R(Q)$ is closed. From (\ref{gleq2}), $Q:R(K^*) \to R(Q)$ is invertible. By (\ref{gleq2}) we obtain that for any $f\in Q(R(K^*))$,
\begin{equation}
\|Q^{-1}(f)\|\leq \frac{1+\beta}{\big(1-(\alpha+\gamma{A^{-1}}\|K^{\dag}\|\|\Phi\|)\big)A^2\|K^{\dag}\|^{-2}}\|f\|.
\end{equation}
On the other hand, for any $f\in X$, we also have
\[Pf=QQ^{-1}Pf=V(\Phi U^*Q^{-1}Pf)=\sum_j(\Phi U^*Q^{-1}Pf)_jg_j\]
Hence for arbitrary $f^*\in X^*$, we get
\begin{eqnarray*}
\|K^*P^*f^*\|&=&\sup_{g\in X, \|g\|=1}| \|(K^*P^*f^*)(g)\|=\sup_{g\in X, \|g\|=1} \|f^*(PKg)\|\\
&=&\sup_{g\in X, \|g\|=1}|[PKg, f]|=\sup_{g\in X, \|g\|=1}|[\sum_j(\Phi U^*Q^{-1}PKg)_jg_j, f]|\\
&=&\sup_{g\in X, \|g\|=1}|\sum_j(\Phi U^*Q^{-1}PKg)_j[g_j, f]|\\
&\leq& \sup_{g\in X, \|g\|=1}\|\Phi U^*Q^{-1}PKg\|_{X_d} \|\{[g_j, f]\}\|_{X_d^*}\\
&\leq& \sup_{g\in X, \|g\|=1}\|\Phi\| \|U^*\| \|Q^{-1}PKg\| \|\{[g_j, f]\}\|_{X_d^*}\\
&\leq& \sup_{g\in X, \|g\|=1}  \frac{B \|\Phi\|(1+\beta)}{1-(\alpha+\gamma{A^{-1}}\|K^{\dag}\|\|\Phi\|)A^2\|K^{\dag}\|^{-2}}\|K\|\|g\| \|\{[g_j, f]\}\|_{X_d^*},
\end{eqnarray*}
so we obtain the lower $X_d^*$-$K$-frame bound condition,
\[\frac{1-(\alpha+\gamma{A^{-1}}\|K^{\dag}\|\|\Phi\|)A^2\|K^{\dag}\|^{-2}}{B \|\Phi\|(1+\beta)\|K\|}\|(PK)^*f^*\|\leq \|[g_j, f]\|.\]
Therefore, $\{g_j\}$ is the $X_d^*$-$PK$-frame for $X^*$.
\end{proof}

\end{document}